\newcommand{\seq}[2]{(#1_{#2})_{#2\geqslant0}}
\newcommand{\addpoint}[1]{#1\ ---\ }
\newtheoremstyle{dfn}%
 {1.5ex plus .3ex minus .1ex}%
 {1ex plus .3ex minus .1ex}%
 {}%
 {}%
 {\sffamily}%
 {\,---\,}%
 {0em}%
 {$\bullet$\hbox{\ }#1\hbox{\ }#2}%
\newtheoremstyle{thm}%
 {1.5ex plus .3ex minus .1ex}%
 {1ex plus .3ex minus .1ex}%
 {\itshape}%
 {}%
 {\sffamily}%
 {\,---\,}%
 {0em}%
 {$\bullet$\hbox{\ }#1\hbox{\ }#2}%
\theoremstyle{dfn}
\newtheorem{dfntn}{Definition}[section]
\theoremstyle{thm}
\newtheorem{thrm}[dfntn]{Theorem}
\newtheorem{lmm}[dfntn]{Lemma}
\newtheorem{prpstn}[dfntn]{Proposition}
\newtheoremstyle{note}%
 {1ex plus .3ex minus .1ex}%
 {1ex plus .3ex minus .1ex}%
 {}%
 {}%
 {\itshape}%
 {.}%
 {1em}%
 {}%
\theoremstyle{note}
\newtheorem{remark}[dfntn]{Remark} 
\newcommand{\M}{\mathcal{M}}
\newcommand{\tq}{\colon}
\newcommand{\unit}{\mathbf{e}}
\newcommand{\R}{\mathcal{R}}
\DeclareDocumentCommand{\size}{ m g }{%
{\IfNoValueT{#2}{|#1|}\IfNoValueF{#2}{|#1|_{#2}}}%
}
\newcommand{\C}{\mathscr{C}}
\newcommand{\Cstar}{\mathfrak{C}}
\newcommand{\ZZ}{\mathbb{Z}}
\newcommand{\RR}{\mathbb{R}}
\newcommand{\RRR}{\mathscr{I}}
\newcommand{\U}{\mathsf{U}}
\newcommand{\UUU}{\mathcal{U}}
\newcommand{\G}{G}
\newcommand{\myleq}{\preccurlyeq}
\newcommand{\up}[1]{\uparrow #1}
\newcommand{\BM}{\partial\M}
\newcommand{\Mbar}{\overline\M}
\newcommand{\Pyr}[1]{\text{\normalfont\sffamily Pyr}_{\Sigma}(a_1)}
\newcommand{\Lk}{\mathscr{L}}
\newcommand{\pr}{\mathbb{P}}
\newcommand{\B}{\mathbf{B}}
\newcommand{\iid}{{\normalfont i.i.d.}\xspace}
\newcommand{\ie}{{\normalfont i.e.,}\xspace}
\newcommand{\Rref}[1]{\ref{#1}}
\newcommand{\ve}{\varepsilon}
\begin{document}

\begin{center}
 {\huge\bfseries Uniform generation of large traces}\par\medskip
 \begin{minipage}[t]{.35\linewidth}
 \begin{center}
 {\Large Samy Abbes}\\
 {Université Paris Cité\\
IRIF CNRS UMR 8243\\
8 place Aurélie Nemours\\
F-75013 Paris, France}
\end{center}
\end{minipage}
\qquad
\begin{minipage}[t]{.35\linewidth}
 \begin{center}
{\Large Vincent Jugé}\\ 
Univ Gustave Eiffel\\
CNRS, LIGM\\
F-77454 Marne-la-Vallée, France
 \end{center}
\end{minipage}
\end{center}

%
%
\bigskip
\begin{abstract}
We introduce an algorithm for the uniform generation of infinite traces, \ie infinite words up to commutation of some letters.
The algorithm outputs on-the-fly approximations of a theoretical infinite trace, the latter being distributed according to the exact uniform probability measure.
The average size of the approximation grows linearly with the time of execution of the algorithm, hence its output can be effectively used while running. 

Two versions of the algorithm are given. A version without rejection has a good production speed, provided that some precomputations have been done, but these may be costly. A version with rejection requires much fewer computations, at the expense of a production speed that can be small.

We also show that, for some particular trace monoids, one or the other version of the algorithm can actually be very good: few computations for a good production speed.

\medskip
\textbf{Keywords:} Random generation, trace monoid, Möbius polynomial, chordal graph
\end{abstract}
%
%

\section*{Introduction} 

\paragraph*{\sffamily Context and motivations.}
\label{sec:context-motivations}

Trace monoids are models of discrete-event concurrent systems.
Consider an alphabet $\Sigma$ equipped with a binary, symmetric and reflexive relation~$\R$, and let $I=(\Sigma\times\Sigma)\setminus\R$.
The \emph{trace monoid} $\M=\M(\Sigma,\R)$ is the presented monoid $\langle\Sigma \mid \RRR\rangle^+$ where $\RRR$ is the collection of pairs $(ab,ba)$ for $(a,b)$ ranging over~$I$.
Hence, an element in~$\M$, called a \emph{trace}, is the congruence class of some word $x\in\Sigma^*$, and congruent words are obtained from $x$ by successively exchanging the places of adjacent letters $a$ and $b$ such that $(a,b)\in I$.
For $(a,b)\in I$, the corresponding elements $a$ and $b$ are therefore commutative in~$\M$, which corresponds, from the point of view of systems theory, to the concurrency of actions represented by $a$ and~$b$.
Trace monoids have been ubiquitous in computer science and in combinatorics, since their very first use as models for databases with concurrency~\cite{diekert90,diekert95}.

A trace $x$ of a trace monoid $\M$ represents an execution of some concurrent system.
It is thus natural to inquire about the random generation of traces, and more precisely the random generation of traces of large length---the \emph{length} of a trace is simply the length of the associated congruent words.
Given a large integer~$N$, one could turn toward Boltzmann generation techniques to operate the random generation of traces of length~$N$.
However, when this is done, this technique is of little help for the generation of traces of a larger length; that would require to start again the procedure back from the beginning.
Whereas, when seeking for the simulation of ``real-life'' executions of a system, there is often little argument for stopping the execution at a particular time. 

It is therefore more appealing to design techniques for the random generation of infinite executions.
The precise target is the following: given a notion of infinite traces and a uniform measure for the space of these infinite traces, we look for an algorithm that produces, for each integer~$n$, a finite random trace of length proportional to~$n$ on average, and which coincides with a prefix of a uniformly distributed infinite trace. 

\medskip
\paragraph*{\sffamily The direct approach using the Cartier-Foata normal form.}
\label{sec:question-fo-cartier}

It has been well known since the work of Cartier and Foata~\cite{cartier69} that, for every trace monoid~$\M$, there is a finite graph $(\Cstar,\to)$, where $\Cstar$ is a finite subset of~$\M$, such that every trace $x\in\M$ admits one unique factorization as a concatenation $c_1\cdot\ldots\cdot c_k$ of elements $c_i\in\Cstar$ satisfying $c_i\to c_{i+1}$ for all~$i$.
The sequence $(c_1,\ldots,c_k)$ thus defined is the \emph{normal form} of~$x$.
Infinite traces correspond to infinite sequences~$(c_i)_{i\geqslant1}$, still with $c_i\to c_{i+1}$ for all~$i$.

This normal form has a nice probabilistic interpretation: the sequence $(c_i)_{i\geqslant1}$ corresponding to an infinite trace drawn \emph{uniformly at random} happens to be a Markov chain, with values in~$\Cstar$, whose initial distribution and transition matrix can be given through explicit formulas~\cite{abbes15}.
These facts seem to pave the way for an easy procedure for the random generation of infinite traces: simply simulate the Markov chain~$(c_i)_{i\geqslant1}$!

This procedure turns out to be inadequate in several cases, for complexity reasons.
Indeed, the set $\Cstar$ is defined as the set of cliques of the graph $(\Sigma,I)$, hence efficient computations indexed by $\Cstar$ in general are hopeless.
In fact, it all depends on the monoid~$\M$, and more precisely on the size of the set of cliques~$\Cstar$.

\medskip

\paragraph*{\sffamily Contributions: an approach to the random generation of traces using pyramidal traces.}
\label{sec:altern-appr}

Decomposing infinite traces according to their Cartier-Foata normal has the advantage, from the random generation viewpoint, of yielding a Markovian scheme among a finite set.
In this paper, we make use of another and original decomposition of infinite traces.
Namely, we consider so-called pyramidal traces, which are of arbitrary size, and which play the role of \emph{excursions} in classical probabilistic processes.
In particular, infinite random traces correspond to the infinite concatenation of an \iid\ sequence of pyramidal traces.

Considering concatenations of pyramidal traces amounts to trading the Markov scheme attached to the normal form of traces for a simpler \iid\ scheme. For this trade-off to be worth the cost, we need an efficient way of producing random pyramidal traces. This is the core of the random algorithms introduced in this paper. 

In order to generate random pyramidal traces, we introduce a random algorithm that is recursive with respect to the size of the alphabet of the trace monoid~$\M$. The algorithm applies to any irreducible trace monoid, which extends a work by one of the authors that was restricted to ``dimer-like'' trace monoids~\cite{abbes17}. In turn, one obtains an algorithmic procedure which runs endlessly and outputs at each unit of time a new random fragment of trace, such that the infinite concatenation of all these fragments would be an infinite trace uniformly distributed. The finite concatenations of the produced fragments are thus the prefixes of a random uniform infinite trace.

Two versions of the random algorithm for producing prefixes of infinite traces are given, one with a rejection procedure and one without rejection. On the one hand, the version without rejection produces prefixes at a linear average rate, and with a reasonable rate. However it may require up to exponentially many computations for on-the-fly tuning of the algorithm with the adequate probabilistic parameters; these computations could also be done before the execution of the algorithm, but that would then require of course an exponential amount of memory.

On the other hand, the version with rejection also outputs prefixes at a linear average rate, and requires only few computations. But the bound on the average production rate can be exponentially small.

It turns out that for several classes of graphs $(\Sigma,\R)$, one can guarantee much better bounds than the general bounds. This is the case for both versions of the algorithms. We show in particular a remarkable fact by characterizing precisely the graphs for which the version with rejection actually yields no rejection at all. This is the class of chordal graphs; for them, using the version with rejection is much advised, since then one wins on both sides: only few computations and a good production speed. For graphs with bounded tree-width, it is the version without rejection which is advised, since the precomputations are in a polynomial growth instead of the general exponential growth. Again, one wins on both sides with these graphs: a good production speed for few computations.

The complete procedure that we study, since it runs in infinite time, could hardly qualify as an algorithm, obviously. Yet, it can be effectively used for the production of \emph{finite} traces, since the procedure does indeed produce growing outputs during the execution, and at a linear rate in average. In particular, it can be used for verifying and quantifying properties that can be checked at finite horizon. Furthermore, since the computation keeps going on endlessly---contrary to a classical Boltzmann-like procedure, for instance---the procedure can be used even without \emph{a priori} bounds on the size of the expected sample, which makes it instrumental.

\medskip
Hence our contribution is threefold.
We first describe thoroughly the laws of pyramidal traces decomposing finite traces; for this, we make use of combinatorial identities on pyramidal traces found in the literature~\cite{viennot86}. We also give a precise proof of the decomposition of traces through pyramidal traces, which was kind of a folklore result until now.

Secondly, we introduce two versions of a recursive random algorithm for the generation of increasing prefixes of infinite traces, we prove their correctness and we give bounds on their average complexity. 

Finally, we discuss some classes of graphs for which the general bounds on the complexity of the algorithms can be much strengthened.  

\medskip
\paragraph*{\sffamily Outline of the paper.}

In Section~\Rref{sec:backgr-trace-mono}, we introduce the basic combinatorial and probabilistic material for trace monoids.
We also introduce infinite traces and define the uniform distribution at infinity.
Finally, we describe the probability laws of pyramidal traces decomposing finite traces.
Section~\Rref{sec:rand-gener-finite} contains the random algorithms, both for the generation of finite traces and of infinite traces.
Finally, Section~\Rref{sec:when-should-pyram} discusses the bounds of the algorithms for some special classes of trace monoids.

\section{Trace monoids and probability distributions}
\label{sec:backgr-trace-mono}

\subsection{Combinatorics of traces and discrete distributions}
\label{sec:comb-trac-pyram} 

Let~$\M=\M(\Sigma,\R)$ be a trace monoid.
It is known that elements of~$\M$ can be represented by \emph{heaps}~\cite{viennot86}, according to a bijective correspondence which we briefly recall now, following the presentation of~\cite{krattenthaler06}.
As illustrated in Fig.~\Rref{fig:pqjwdpoqjw}~(i), a \emph{heap} is a triple~$(P,\myleq,\ell)$, where~$(P,\myleq)$ is a poset and~$\ell:P\to\Sigma$ is a labeling of~$P$ by elements of~$\Sigma$, satisfying the two following properties:
\begin{inparaenum}[(1)]
\item\label{item:1} if~$x,y\in P$ are such that~$\ell(x) \R \ell(y)$, then, $x\myleq y$ or~$y\myleq x$;
\item the relation~$\myleq$ is the transitive closure of the relations from~(\Rref{item:1}).
\end{inparaenum}
More precisely, the heap is the equivalence class of~$(P,\myleq,\ell)$ up to isomorphism of labeled partial orders. 

To picture heaps corresponding to traces in~$\M$, one represents elements of~$\Sigma$ as elementary pieces that can be piled up with the following constraints, as illustrated in Fig.~\Rref{fig:pqjwdpoqjw}~(ii):
\begin{inparaenum}[(1)]
\item pieces can only be moved vertically;
\item pieces labeled by the same letter move along the same vertical lane; and
\item\label{itensoasaa} two pieces labeled by~$a$ and~$b$ in~$\Sigma$ can be moved independently of each other if and only if~$(a,b)\notin \R$.
\end{inparaenum}

\newcommand{\edge}[4]{ \draw[thick,->] (#1*0.75+#3*0.25,#2+0.25) -- (#1*0.25+#3*0.75,#4-0.25); }
\newcommand{\tracebox}[3]{
\node at (#1,#2) {$#3$};
\draw[thick] (#1-0.9,#2-0.4) -- (#1+0.9,#2-0.4) -- (#1+0.9,#2+0.4) -- (#1-0.9,#2+0.4) -- cycle;
}

\begin{figure}[t]
\centering
\begin{tikzpicture}[scale=0.5,>=stealth]
\node at (0,0) {$a$};
\node at (3,0) {$d$};
\node at (1,1) {$b$};
\node at (2,2) {$c$};
\node at (1,3) {$b$};
\node at (3,3) {$d$};
\node at (0,4) {$a$};

\edge{0}{0}{1}{1}
\edge{1}{1}{2}{2}
\edge{3}{0}{2}{2}
\edge{2}{2}{1}{3}
\edge{2}{2}{3}{3}
\edge{1}{3}{0}{4}

\node[anchor=north] at (1.5,-0.6) {(i)};

\begin{scope}[shift={(9,0)}]
\tracebox{0}{0}{a}
\tracebox{3}{0}{d}
\tracebox{1}{1}{b}
\tracebox{2}{2}{c}
\tracebox{1}{3}{b}
\tracebox{3}{3}{d}
\tracebox{0}{4}{a}

\draw[thick] (-1.2,-0.6) -- (4.2,-0.6);

\node[anchor=north] at (1.5,-0.6) {(ii)};
\end{scope}
\end{tikzpicture}
\caption{(i)~{\slshape Hasse diagram of the heap corresponding to the trace ${x=a\cdot b\cdot d\cdot c\cdot b\cdot a\cdot d}$ of the trace monoid~$\M(\Sigma,\R)$ where $\Sigma=\{a,b,c,d\}$ and where $\R$ is the reflexive and symmetric closure of~$\{(a,b),(b,c),(c,d)\}$.\quad}(ii)~{\slshape Graphical representation of the same heap with pieces piled up upon each other.}}
\label{fig:pqjwdpoqjw}
\end{figure}
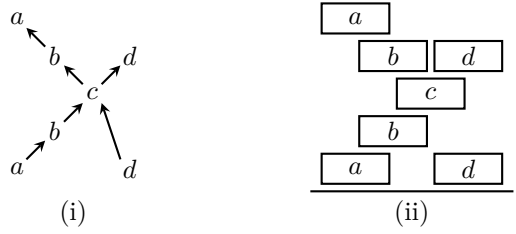

If~$\Sigma'$ is a subset of~$\Sigma$, we denote by~$\M_{\Sigma'}$ the sub-monoid of~$\M(\Sigma,\R)$ generated by~$\Sigma'$.
In particular,~$\M(\Sigma,\R)=\M_\Sigma$, a notation that we shall use from now on.

A \emph{clique} of $\M_\Sigma$ is any commutative product~$a_1\cdot\ldots\cdot a_k$, where $a_1,\ldots,a_k$ are distinct elements of~$\Sigma$ such that~$(a_i,a_j)\notin\R$ for all distinct~$i$ and~$j$.
We denote by~$\C_\Sigma$ the set of cliques of~$\M_\Sigma$\,.

Cliques of~$\M_\Sigma$ play an important role for the study of its combinatorics.
Indeed, each pair~$(\Sigma,\R)$ is associated with the \emph{Möbius polynomial}~$\mu_\Sigma(X)$ and the generating series~$G_\Sigma(X)$ defined as in~\cite{cartier69} by:
\begin{gather*}
\mu_\Sigma(X)=\sum_{\gamma\in\C_\Sigma}(-1)^{\size\gamma}X^{\size\gamma} \text{\quad and\quad }
G_\Sigma(X)=\sum_{x\in\M_\Sigma}X^{\size x}\,.
\end{gather*}

We generalize this series as follows.
Given a trace~$x\in\M_\Sigma$ that corresponds to a heap~$(P,\myleq,\ell)$, let~$\max(x)$ denote the set of labels of maximal elements of the poset~$(P,\myleq)$; equivalently, $\max(x)$ consists of those elements $a_k$ of $\Sigma$ for which $x$ can be written as a product $a_1 \cdot \ldots \cdot a_k$ of elements of~$\Sigma$.
For instance, if $x$ is the heap represented in Fig.~\Rref{fig:pqjwdpoqjw}, we have $\max(x) = \{a,d\}$.

It has been known since~\cite{cartier69} that the letters of $\max(x)$ commute pairwise, and that their commutative product $\hat{x}$ has the following property: for every heap~$y \in \M_\Sigma$, we have
\begin{gather}
\label{eq:000}
\max(y) \subseteq \max(x \cdot y) = \max(\hat{x} \cdot y) \subseteq \max(x) \cup \max(y).
\end{gather}

Now, let~$\U$ be a subset of~$\Sigma$.
We denote by~$\G_{\Sigma,\U}(X)$ the generating series of the elements $x$ of~$\M_\Sigma$ for which $\max(x) \subseteq \U$:
\begin{gather*}
\G_{\Sigma,\U}(X)=
\sum_{\substack{x\in\M_\Sigma\tq\max(x)\subseteq\U}}X^{\size x}\,. 
\end{gather*}

As observed in~\cite{viennot86}, every trace $x \in \M_\Sigma$ has a unique factorization $x = y \cdot z$ where $\max(y) \subseteq \U$ and $z \in \M_{\Sigma \setminus \U}$, thereby proving that:
\begin{gather}
\label{eq:1}
\G_{\Sigma,\U}(X)=\frac{\mu_{\Sigma\setminus\U}(X)}{\mu_{\Sigma}(X)}\,. 
\end{gather}

If~$\Sigma\neq\emptyset$, then $\mu_\Sigma(X)$ has a unique root of smallest modulus in the complex plane~\cite{goldwurm00,csikvari13,krob03}. This root, which is denoted by~$p_\Sigma$\,, is positive real and is at most~$1$. It coincides with the radius of convergence of the power series~$\G_{\Sigma,\U}(X)$ for any non empty subset~$\U$ of~$\Sigma$.
Hence, substituting~$p$ to~$X$ in the above identity provides an equality in~$\RR$ if~$p\in(0,p_\Sigma)$. As a particular case, obtained for~$\U=\Sigma$, one has:
\begin{gather}
\label{eq:5}
G_\Sigma(p)=\frac1 {\mu_\Sigma(p)} \qquad\text{for all~$p\in(0,p_\Sigma)$}. 
\end{gather}

Consequently, for each $p\in(0,p_\Sigma)$, the following formula defines a probability distribution on the countable set~$\M_\Sigma$:
\begin{gather}
\label{eq:7}
\forall x\in\M_\Sigma\quad B_{\Sigma,p}(x)=\mu_\Sigma(p)\,p^{\size x},
\end{gather}
where we simply write $B_{\Sigma,p}(x)$ instead of $B_{\Sigma,p}(\{x\})$ for a singleton set~$\{x\}$.

\subsection{Infinite traces and uniform measure at infinity}
\label{sec:infin-trac-unif} 

We briefly explain the construction of infinite traces and of the uniform measure on their set. 

The trace monoid $\M_\Sigma$ is equipped with its left-divisibility order~$\leqslant$\,, defined by $x\leqslant y$ if and only if there exists a trace $z\in\M_\Sigma$ such that $y=x\cdot z$; 
this trace $z$ is unique, and it is denoted by $x^{-1} \cdot y$. If $x\leqslant y$, we say that $x$ is a \emph{left divisor} of~$y$.
Note that the order $\leqslant$ is indeed a partial order.
Moreover, the partial order $(\M_\Sigma,\leqslant)$ is a lower semi-lattice: for each non-empty subset $X$ of~$\M_\Sigma$\,, there is a trace $x \in \M_\Sigma$, called the $\leqslant$-\emph{meet} of~$X$, such that
\begin{gather}
\label{eq:3b}
\forall y \in \M_\Sigma \quad y \leqslant x \iff (\forall z \in X \quad y \leqslant z).
\end{gather}
Furthermore, for each non-empty subset $X$ of $\M_\Sigma$ whose elements have a common upper $\leqslant$-bound, there exists a trace $x \in \M_\Sigma$, called the $\leqslant$-\emph{join} of~$X$, such that
\begin{gather}
\label{eq:3c}
\forall y \in \M_\Sigma \quad x \leqslant y \iff (\forall z \in X \quad z \leqslant y).
\end{gather}
This join coincides with the meet of the set of common upper $\leqslant$-bounds of~$X$.

Dually, since the relations in $\M_\Sigma$ are invariant under left-right reversal, $\M_\Sigma$~is also equipped with its right-divisibility order~$\geqslant$\,, defined by $x \geqslant y$ if and only if there exists a trace $z\in\M_\Sigma$ such that $x=z\cdot y$; the trace $z$ is unique, and it is denoted by $x \cdot y^{-1}$. If $x\geqslant y$, we say that $y$ is a \emph{right divisor} of~$x$.
Once again, the order $\geqslant$ is a partial order, each non-empty subset $X$ of $\M_\Sigma$ has a $\geqslant$-meet, and it has a $\geqslant$-join if its elements admit a common upper $\geqslant$-bound.

If $x=\seq xn$ and $y=\seq yn$ are two non-decreasing sequences in~$\M_\Sigma$\,, we define~$x\sqsubseteq y$ whenever, for all~$n \geqslant 0$,
there exists~$k \geqslant 0$ such that~$x_n\leqslant y_k$.
The relation~$\sqsubseteq$ is a preorder relation on the set of non-decreasing sequences.

Let~$\asymp$ be the equivalence relation defined by~$x\asymp y$ if and only if~$x\sqsubseteq y$ and~$y\sqsubseteq x$.
Equivalence classes of non-decreasing sequences modulo~$\asymp$ are called \emph{generalized traces}, and their set is denoted by~$\Mbar_\Sigma$\,.
The set~$\Mbar_\Sigma$ is equipped with an ordering relation, denoted by~$\leqslant$, which is the collapse of the preordering relation~$\sqsubseteq$.

The partial order $(\M_\Sigma,\leqslant)$ is embedded into~$(\Mbar_\Sigma,\leqslant)$, by sending an element $x\in\M_\Sigma$ to the equivalence class of the constant sequence~$\seq xn$ with~$x_n=x$ for all $n\geqslant0$.
Hence, we identify~$\M_\Sigma$ with its image in~$\Mbar_\Sigma$\,, and we put $\BM_\Sigma=\Mbar_\Sigma\setminus\M_\Sigma$\,.
Elements of~$\BM_\Sigma$ are called \emph{infinite traces}. Visually, infinite traces can be pictured as heaps obtained as in Fig.~\Rref{fig:pqjwdpoqjw}, but with infinitely many pieces piled up.

For every~$x\in\M_\Sigma$\,, we define the \emph{visual cylinder} of base~$x$ as the following subset of~$\BM_\Sigma$\,:
\[\up x=\{\xi\in\BM_\Sigma\tq x\leqslant\xi\}.\]

\emph{Via} the embedding~$\M_\Sigma\to\Mbar_\Sigma$\,, the family~$(B_{\Sigma,p})_{p \in (0,p_\Sigma)}$ can be seen as a family of discrete distributions on the compactification~$\Mbar_\Sigma$ rather than on~$\M_\Sigma$\,.
Standard techniques from functional analysis allow to prove the weak convergence of~$B_{\Sigma,p}$\,, when~$p\to p_\Sigma$\,, toward a probability measure~$\B_\Sigma$ on~$\BM_\Sigma$\,, characterized by the following identities~\cite{abbes15,abbes15conf}:
\begin{gather}
\label{eq:9}
\forall x \in \M_\Sigma\quad\B_\Sigma(\up x)=(p_\Sigma)^{\size x}\,.
\end{gather} 

\begin{dfntn}
\label{def:3}
The probability measure~$\B_\Sigma$ on\/~$\BM_\Sigma$ is called the \emph{uniform measure at infinity}.
\end{dfntn}

So far, we have thus defined a family of probability measures~$B_{\Sigma,p}$ on~$\Mbar_\Sigma$\,, for~$p$ ranging over the open interval~$(0,p_\Sigma)$, completed by a probability measure~$\B_\Sigma$.
Note the alternative:~$B_{\Sigma,p}$~is concentrated on~$\M_\Sigma$ for all $p<p_\Sigma$; whereas $\B_\Sigma$ is concentrated on~$\BM_\Sigma$\,.

\subsection{Pyramidal traces}
\label{sec:pyramidal-traces}

Given a trace monoid $\M=\M(\Sigma,\R)$, we define the \emph{link}~$\Lk(a_1)$ of a letter $a_1\in\Sigma$ by:
\begin{gather}
\label{eq:3}
\Lk(a_1)=\bigl\{b\in\Sigma\tq(a_1,b)\in\R\bigr\}.
\end{gather}

From the definition of the Möbius polynomials, we can thus derive the following identity, which will be useful later:
\begin{gather}
\label{eq:6}
\mu_\Sigma(X)=\mu_{\Sigma\setminus\{a_1\}}(X)-X\mu_{\Sigma\setminus\Lk(a_1)}(X)
\end{gather}

\begin{dfntn}
For $a_1\in\Sigma$, the \emph{$a_1$-pyramidal traces} are the traces belonging to the following subset:
\begin{gather}
\label{eq:11}
\Pyr{a_1}=\bigl\{z\cdot a_1\tq z\in\M_{\Sigma\setminus\{a_1\}} \text{ and } \max(z)\subseteq\Lk(a_1)\bigr\}.
\end{gather}
\end{dfntn}

Equivalently, and thanks to~\eqref{eq:000}, these are the traces $x$ with exactly one occurrence of the letter~$a_1$ and such that $\max(x) = \{a_1\}$.
This is illustrated in~Fig.~\Rref{fig:oiqoqihwq}~(i).

Now, for a generic trace~$x$, the successive occurrences of~$a_1$ within $x$ are associated with~$a_1$-pyramidal elements, as shown by the following decomposition result.

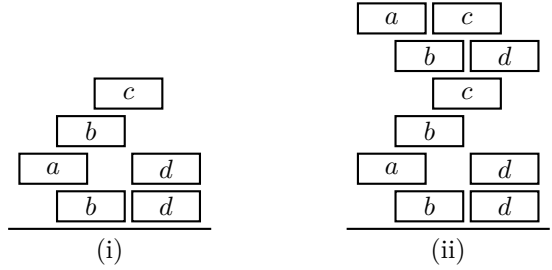
\begin{figure}[t]
\centering
\begin{tikzpicture}[scale=0.5,>=stealth]
\tracebox{1}{0}{b}
\tracebox{3}{0}{d}
\tracebox{0}{1}{a}
\tracebox{3}{1}{d}
\tracebox{1}{2}{b}
\tracebox{2}{3}{c}

\draw[thick] (-1.2,-0.6) -- (4.2,-0.6);

\node[anchor=north] at (1.5,-0.6) {(i)};

\begin{scope}[shift={(9,0)}]
\tracebox{1}{0}{b}
\tracebox{3}{0}{d}
\tracebox{0}{1}{a}
\tracebox{3}{1}{d}
\tracebox{1}{2}{b}
\tracebox{2}{3}{c}
\tracebox{1}{4}{b}
\tracebox{3}{4}{d}
\tracebox{0}{5}{a}
\tracebox{2}{5}{c}

\draw[thick] (-1.2,-0.6) -- (4.2,-0.6);

\node[anchor=north] at (1.5,-0.6) {(ii)};
\end{scope}
\end{tikzpicture}
\caption{(i)~\textsl{Heap representing the~$c$-pyramidal element~$b\cdot a\cdot b\cdot d\cdot d\cdot c$ in the trace monoid~$\M_\Sigma$\,, where~$(\Sigma,\R)$ is as in Figure~\Rref{fig:pqjwdpoqjw}.}\quad (ii)~\textsl{An element~$x\in\M_\Sigma$ whose decomposition through~$c$-pyramidal traces is~$(babddc)\cdot(bdc)\cdot a$.}}
\label{fig:oiqoqihwq}
\end{figure} 

\begin{prpstn}
\label{prop:3}
Let~$\M_\Sigma$ be a trace monoid and let~$a_1\in\Sigma$.
Every trace~${x\in\M_\Sigma}$ admits a unique factorization as a concatenation $u_0 \cdot \ldots \cdot u_k$, where $u_0,\dots,u_{k-1}$ are $a_1$-pyramidal traces and $u_k \in \M_{\Sigma\setminus\{a_1\}}$.

The integer $k$ is given by $k = |x|_{a_1}$, \ie $k$~is the number of occurrences of $a_1$ in~$x$.
Furthermore, $\max(x) = \max(u_k) \cup \{a_1\}$ if $k \geqslant 1$ and $u_k \in \M_{\Sigma \setminus \Lk(a_1)}$, and $\max(x) = \max(u_k)$ otherwise.
\end{prpstn}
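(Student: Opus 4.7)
The plan is to proceed by induction on $k=|x|_{a_1}$, the number of occurrences of $a_1$ in $x$. The base case $k=0$ is immediate: $x\in\M_{\Sigma\setminus\{a_1\}}$, so the factorization $x=u_0$ works, and no factorization with $k'\geqslant 1$ $a_1$-pyramidal prefixes can exist since each such prefix would contribute at least one letter $a_1$ to $x$. The $\max$ identity reduces to the trivial $\max(x)=\max(u_0)$.

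For the inductive step, I work with the heap representation $(P,\myleq,\ell)$ of $x$ described in Fig.~\Rref{fig:pqjwdpoqjw}, relying on the standard bijection between left-divisors of $x$ and down-sets of the poset $(P,\myleq)$. Since $\R$ is reflexive, the $a_1$-labeled elements of $P$ are totally ordered by $\myleq$; let $\beta$ be the smallest one. I take $u_0$ to be the trace associated with the principal down-set $D=\{p\in P\tq p\myleq\beta\}$. By construction, $\beta$ is the unique maximum of $D$, so $u_0$ contains a single occurrence of $a_1$ and $\max(u_0)=\{a_1\}$. Moreover, any element maximal in $D\setminus\{\beta\}$ must be directly below $\beta$ in the Hasse diagram of $(P,\myleq)$, and therefore carries a label in $\Lk(a_1)$; hence $u_0$ is $a_1$-pyramidal. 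Writing $x=u_0\cdot y$, we have $|y|_{a_1}=k-1$, and the induction hypothesis applied to $y$ produces the desired factorization.

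Uniqueness follows by running the same analysis in reverse. If $x=u'_0\cdot u'_1\cdots u'_{k'}$ is any valid factorization, the left-divisor $u'_0$ corresponds to some down-set $D'$ of $P$; as $u'_0$ contains exactly one $a_1$-piece and $D'$ is down-closed, this piece must coincide with the minimal $a_1$-piece $\beta$; then $\max(u'_0)=\{a_1\}$ forces $D'$ to be the whole principal down-set of $\beta$, so $u'_0=u_0$. In particular $k'=k$, and equality of the remaining factors follows by induction on $y=u_0^{-1}\cdot x$.

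Finally, for the $\max$ identity I observe that in the heap of $x=u_0\cdots u_k$, every piece of $u_0,\ldots,u_{k-2}$ lies below the $a_1$-piece of $u_{k-1}$: consecutive $a_1$-pieces are comparable along the $a_1$-lane by reflexivity of $\R$, and every piece of a pyramidal factor is below the $a_1$-piece of that same factor. Consequently, only pieces of $u_{k-1}$ and $u_k$ can contribute to $\max(x)$; among those of $u_{k-1}$ only its $a_1$-piece is maximal, and it remains maximal in $x$ if and only if no letter of $u_k$ belongs to $\Lk(a_1)$, i.e.\ iff $u_k\in\M_{\Sigma\setminus\Lk(a_1)}$. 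The main obstacle is to justify cleanly the bijection between left-divisors and down-sets of the heap; this is folklore from~\cite{viennot86,krattenthaler06}, but it is precisely the bridge that allows the algebraic statement of the proposition to be reduced to a transparent combinatorial analysis on heaps.
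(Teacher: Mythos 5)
Your argument is correct, but it follows a genuinely different route from the paper's. You work on the heap side: you peel the factorization off from the \emph{left}, taking $u_0$ to be the principal down-set of the lowest $a_1$-piece (using that the $a_1$-pieces form a chain and that covers in a heap force $\R$-related labels), and you get uniqueness and the description of $\max(x)$ by analysing which pieces can be maximal in the stacked heap. The paper instead peels from the \emph{right}, entirely within the divisibility order: it takes $u_k$ to be the $\geqslant$-join of all right divisors of $x$ lying in $\M_{\Sigma\setminus\{a_1\}}$ (existence of this join being guaranteed by the lattice facts recalled in \S\Rref{sec:infin-trac-unif}), shows $\max(x\cdot u_k^{-1})=\{a_1\}$, splits off $a_1\cdot u_k$, and inducts, with uniqueness also coming from the join property and repeated use of~\eqref{eq:000}. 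The trade-off is clear: your proof is more visual and arguably more transparent, but it rests on the bijection between left divisors of a trace and order ideals of its heap (together with ``concatenation~$=$~stacking''), which you rightly flag as the unproven bridge; that correspondence is standard from~\cite{viennot86}, but it is exactly the kind of folklore the paper set out to avoid, since one of its stated contributions is a precise proof of this decomposition using only the lattice-theoretic facts and~\eqref{eq:000} established in its preliminaries. If you wanted your version to stand alone at the same level of rigour, you would need to prove (or carefully cite a precise statement of) that bijection; apart from this dependency, your existence, uniqueness and $\max$-identity steps are sound, including the case analysis showing that the top $a_1$-piece stays maximal exactly when $u_k\in\M_{\Sigma\setminus\Lk(a_1)}$.
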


\begin{proof}
First, if such a decomposition exists, each factor $u_0,\ldots,u_{k-1}$ contains exactly one occurrence of $a_1$, and $u_k$ contains no occurrence of $a_1$, so that $k = |x|_{a_1}$.

If $|x|_{a_1} = 0$, we have no choice but to take $u_0 = x$, in which case $\max(x) = \max(u_k)$.



Now, we prove the existence and uniqueness of this factorization by induction on~$|x|_{a_1}$; we have already treated the case $|x|_{a_1} = 0$, hence we assume that $|x|_{a_1} \geqslant 1$.
Let $\UUU_k$ be the set of all right divisors of $x$ that belong to~$\M_{\Sigma\setminus\{a_1\}}$.
This set admits a $\geqslant$-join, because $x$ is a common upper $\geqslant$-bound of its elements; let $u_k$ be this $\geqslant$-join.
If $\max(x \cdot u_k^{-1})$ contained a letter $b \neq a_1$, the trace $b \cdot u_k$ would also belong to~$\UUU_k$, a contradiction; hence $\max(x\cdot u_k^{-1})\subseteq\{a_1\}$.

Since $x$ contains an occurrence of~$a_1$, it cannot coincide with~$u_k$. 
This implies that $\max(x \cdot u_k^{-1}) \neq\emptyset$ and thus $\max(x\cdot u_k^{-1})=\{a_1\}$. It follows that $x \geqslant a_1 \cdot u_k$.
Moreover, the trace $y = x \cdot (a_1 \cdot u_k)^{-1}$ must satisfy the relation $\max(y) \subseteq \Lk(a_1)$; indeed, if $\max(y)$ contained a letter $b \notin \Lk(a_1)$, the trace $b \cdot u_k$ would also belong to~$\UUU_k$.
The induction hypothesis ensures that the trace $y = x \cdot (a_1 \cdot u_k)^{-1}$, which contains $k-1$ occurrences of the letter~$a_1$, admits a unique factorization $u_0 \cdot \ldots \cdot u_{k-2} \cdot \hat{u}_{k-1}$, where $\hat{u}_{k-1} \in \M_{\Sigma\setminus\{a_1\}}$.
But then, since $y \geqslant \hat{u}_{k-1}$, we know that $\max(\hat{u}_{k-1}) \subseteq \max(y) \subseteq \Lk(a_1)$, thanks to~(\Rref{eq:000}).
Consequently, the trace $u_{k-1} = \hat{u}_{k-1} \cdot a_1$ is $a_1$-pyramidal, and the desired factorization of $x$ is $u_0 \cdot \ldots \cdot u_k$.

Finally, let $v_0 \cdot \ldots \cdot v_k$ be a factorization of $x$ that satisfies the requirements of Proposition~\Rref{prop:3}.
Since $v_k$ belongs to $\UUU_k$, we must have $u_k \geqslant v_k$.
Let $z = u_k \cdot v_k^{-1}$, and let $z' = v_0 \cdot \ldots \cdot v_{k-1} = u_0 \cdot \ldots \cdot u_{k-1} \cdot z$.
 Using~\eqref{eq:000}, we have $\max(z) \subseteq \max(z') =\max(v_0 \cdot \ldots \cdot v_{k-1}) = \{a_1\}$.
Since $u_k$ contains no occurrence of the letter~$a_1$, the trace $z$ must be empty, \ie $u_k = v_k$.
Thus, the induction hypothesis ensures that the factorization $v_0 \cdot \ldots \cdot v_{k-2} \cdot (v_{k-1} \cdot a_1^{-1})$ of the heap $y$ coincides with $u_0 \cdot \ldots \cdot u_{k-2} \cdot (u_{k-1} \cdot a_1^{-1})$.
This proves that $u_i = v_i$ for all $i \leqslant k$, which completes the proof.
\end{proof} 

For example, if~$a_1 = c$, the element~$x=b\cdot a\cdot b\cdot d\cdot d\cdot c\cdot b\cdot d\cdot a\cdot c$, represented in Fig.~\Rref{fig:oiqoqihwq}~(ii), is decomposed as the product~$u_0 \cdot u_1 \cdot u_2$ of $a_1$-pyramidal elements given by~$u_0=b\cdot a\cdot b\cdot d\cdot d\cdot c$,~$u_1=b\cdot d\cdot c$ and~$u_2=a$.

\subsection{Law of pyramidal traces}
\label{sec:law-pyramidal-traces}

Fix a real $p\in(0,p_\Sigma)$ and pick $x\in\M$ at random according to the discrete probability distribution $B_{\Sigma,p}$ introduced in~\eqref{eq:7}. Then the integer~$k$ and the tuple $(u_0,\ldots,u_{k})$ decomposing $x$ as in Prop.~\Rref{prop:3} become random elements. Let they be denoted by the capital letters $K$ and $(U_0,\ldots,U_K)$ to underline their random nature. It is natural to investigate the law of the tuple of random variables $(K,U_0,\ldots,U_K)$, and this is the topic of this section.

It will actually be useful to have even more precise information. Namely, given a non-empty subset $T\subseteq\Sigma$, we study the law of the tuple $(K,U_0,\ldots,U_K)$ conditionaly to the event $\{\max(\xi)\subseteq T\}$, with $\xi$ being a random variable distributed according to~$B_{\Sigma,p}$. In particular, considering $T=\Sigma$ yields the laws without conditioning.

We first justify the existence of these conditional laws with the following statement (proved below with Th.~\ref{thr:3b}).

\begin{prpstn}
\label{thr:3a}
Let $\M_\Sigma$ be a trace monoid and let $p\in(0,p_\Sigma)$, where $p_\Sigma$ is the unique root of smallest modulus of the Möbius polynomial of~$\M_\Sigma$.
For each subset $T$ of\/~$\Sigma$, we have
\begin{gather}
\label{eq:12}
B_{\Sigma,p}(\xi\in\M\tq\max(\xi)\subseteq T)>0.
\end{gather}
\end{prpstn}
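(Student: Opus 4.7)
The plan is to compute the probability explicitly via the Möbius formulas of Section~\Rref{sec:comb-trac-pyram} and then check positivity.

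First I would observe that by the definition of $B_{\Sigma,p}$ in~\eqref{eq:7} and of $\G_{\Sigma,T}$, one has directly
\[
B_{\Sigma,p}\bigl(\{\xi\in\M_\Sigma\tq\max(\xi)\subseteq T\}\bigr)
= \mu_\Sigma(p)\sum_{\substack{x\in\M_\Sigma\\\max(x)\subseteq T}}p^{\size x}
= \mu_\Sigma(p)\,\G_{\Sigma,T}(p).
\]
The case $T=\emptyset$ is degenerate: only the empty trace has $\max(\xi)\subseteq\emptyset$, so the probability equals $\mu_\Sigma(p)$, which is positive because $\mu_\Sigma(0)=1$, $\mu_\Sigma$ is continuous, and $p_\Sigma$ is the smallest modulus root of $\mu_\Sigma$, so $\mu_\Sigma>0$ on $[0,p_\Sigma)$.

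For $T\neq\emptyset$, identity~\eqref{eq:1} applies, and substituting gives
\[
B_{\Sigma,p}\bigl(\{\xi\in\M_\Sigma\tq\max(\xi)\subseteq T\}\bigr)=\mu_{\Sigma\setminus T}(p).
\]
So the whole proposition reduces to showing that $\mu_{\Sigma\setminus T}(p)>0$ for every $p\in(0,p_\Sigma)$.

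The main step, and the only non-completely-routine one, is the comparison of roots: I would show that $p_\Sigma\leqslant p_{\Sigma'}$ for every subset $\Sigma'\subseteq\Sigma$. This follows from the fact that $\M_{\Sigma'}$ is naturally a sub-monoid of $\M_\Sigma$, so the coefficients of $G_{\Sigma'}(X)=1/\mu_{\Sigma'}(X)$ are bounded termwise by those of $G_\Sigma(X)=1/\mu_\Sigma(X)$; both series have non-negative coefficients, so by Pringsheim's theorem their radii of convergence, namely $p_{\Sigma'}$ and $p_\Sigma$, satisfy $p_{\Sigma'}\geqslant p_\Sigma$. (If $\Sigma\setminus T=\emptyset$, then $\mu_{\Sigma\setminus T}(X)=1$ and the conclusion is immediate.)

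Finally, since $p_{\Sigma\setminus T}\geqslant p_\Sigma>p$ and $p_{\Sigma\setminus T}$ is the smallest modulus root of $\mu_{\Sigma\setminus T}$, the polynomial $\mu_{\Sigma\setminus T}$ has no root in $[0,p]$; combined with $\mu_{\Sigma\setminus T}(0)=1>0$ and continuity, this gives $\mu_{\Sigma\setminus T}(p)>0$, which finishes the proof. The only conceptual obstacle is the comparison $p_\Sigma\leqslant p_{\Sigma\setminus T}$; everything else is a direct unfolding of the formulas established in Section~\Rref{sec:comb-trac-pyram}.
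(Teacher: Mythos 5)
Your proof is correct, but it takes a genuinely different and heavier route than the paper. The paper's own argument is a one-liner: the event $\{\xi\in\M_\Sigma\tq\max(\xi)\subseteq T\}$ always contains the empty trace $\unit$, and $B_{\Sigma,p}(\{\unit\})=\mu_\Sigma(p)>0$ by \eqref{eq:7} and \eqref{eq:5}, so positivity is immediate and uniform in $T$ (including $T=\emptyset$) with no case distinction. You instead compute the probability exactly: combining \eqref{eq:7} with \eqref{eq:1} gives $B_{\Sigma,p}(\max(\xi)\subseteq T)=\mu_\Sigma(p)\,\G_{\Sigma,T}(p)=\mu_{\Sigma\setminus T}(p)$, and you then reduce everything to the monotonicity of the roots, $p_\Sigma\leqslant p_{\Sigma'}$ for $\Sigma'\subseteq\Sigma$, which you prove by termwise domination of the coefficients of $\G_{\Sigma'}$ by those of $\G_\Sigma$ (the appeal to Pringsheim is not really needed here: for series with non-negative coefficients, termwise domination already compares radii of convergence, and the paper's cited fact that $p_{\Sigma'}$ is the radius of convergence of $\G_{\Sigma'}$ does the rest). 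This is essentially the same submonoid-comparison argument the paper uses in Lemma~\Rref{lem:1} to get $\mu_{\Sigma\setminus\{a_1\}}(p)>0$, so your route is sound and even buys more than the statement asks: it identifies the normalizing constant in \eqref{eq:10'} as $\mu_{\Sigma\setminus T}(p)$ and shows $\mu_{\Sigma'}(p)>0$ for every subset $\Sigma'$, at the cost of invoking the factorization identity \eqref{eq:1} and analytic facts where a single trivial lower bound suffices.
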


\begin{proof}
Let $\unit$ be the empty trace, \ie the unit element of the trace monoid.
Observing that
$B_{\Sigma,p}(\xi\in\M\tq \max(\xi)\subseteq T)\geqslant B_{\Sigma,p}(\{\unit\}) = \mu_{\Sigma}(p) > 0$ for every set $T \subseteq \Sigma$ proves~\eqref{eq:12}.
\end{proof}

Hence, there exists a distribution $D_{\Sigma,T}$ on $\M_\Sigma$, defined by
\begin{gather}
\label{eq:10}
D_{\Sigma,T}(\xi)=B_{\Sigma,p}\bigl(\xi\;\big|\max(\xi)\subseteq T\bigr);
\end{gather}
in other words,
\begin{gather}
\label{eq:10'}
D_{\Sigma,T}(\xi) = \frac{B_{\Sigma,p}(\xi)}{B_{\Sigma,p}(\xi\in\M\tq\max(\xi)\subseteq T)}
\end{gather}
if $\max(\xi) \subseteq T$, and $D_{\Sigma,T}(\xi) = 0$ otherwise.

\begin{thrm}
\label{thr:3b}
Let $\M_\Sigma$ be a trace monoid and let $p\in(0,p_\Sigma)$, where $p_\Sigma$ is the unique root of smallest modulus of the Möbius polynomial of~$\M_\Sigma$.
Consider some letter $a_1 \in \Sigma$.
We have $\mu_{\Sigma\setminus\{a_1\}}(p)>0$.
Hence, we define the real number
\begin{gather}
\label{eq:4}
r=p\frac{\mu_{\Sigma\setminus\Lk(a_1)}(p)}{\mu_{\Sigma\setminus\{a_1\}}(p)} = 1-\frac{\mu_\Sigma(p)}{\mu_{\Sigma\setminus\{a_1\}}(p)}.
\end{gather}

Furthermore, if $a_1 \in T$ with $T\subseteq\Sigma$, let $\xi=U_0\cdot\ldots\cdot U_K$ be the decomposition given by Proposition~{\normalfont\Rref{prop:3}} of a random element $\xi\in\M$ distributed according to the law~$D_{\Sigma,T}$. Then:
\begin{enumerate}
\item\label{item:3} The random variable $K=|\xi|_{a_1}$ follows a geometric law of parameter~$r$:
\begin{gather}
\label{eq:2}
\forall k\in\ZZ_{\geqslant0}\quad D_{\Sigma,T}(K=k)=(1-r)r^k
\end{gather}
\item\label{item:4} For every $k \geqslant 0$, and conditionally on the event $\{K=k\}$:
\begin{enumerate}
\item\label{item:8} the variables $U_0,\ldots,U_{k-1},U_k$ are independent;
\item\label{item:9} the variables $U_0 \cdot a_1^{-1},\ldots,U_{k-1} \cdot a_1^{-1}$ are distributed in $\M_{\Sigma\setminus\{a_1\}}$ according to $D_{\Sigma\setminus\{a_1\},\Lk(a_1)}$;
\item the variable $U_k$ is distributed in $\M_{\Sigma\setminus\{a_1\}}$ according to $D_{\Sigma\setminus\{a_1\},T}$.
\end{enumerate}
\end{enumerate}
\end{thrm}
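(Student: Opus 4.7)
The plan is to unroll the definition of $D_{\Sigma,T}$ directly by plugging the Proposition~\Rref{prop:3} factorization into the density formula $B_{\Sigma,p}(\xi)=\mu_\Sigma(p)p^{\size\xi}$ and identifying the resulting joint law as a geometric distribution times a product of the announced marginals. Before entering the computation, I would first establish the positivity of $\mu_{\Sigma\setminus\{a_1\}}(p)$: identity~\eqref{eq:1} applied with $\U=\{a_1\}$ gives $\G_{\Sigma,\{a_1\}}(p)=\mu_{\Sigma\setminus\{a_1\}}(p)/\mu_\Sigma(p)$, and since $\G_{\Sigma,\{a_1\}}$ is a power series with non-negative integer coefficients and constant term~$1$, its value at $p\in[0,p_\Sigma)$ is at least~$1$, while $\mu_\Sigma(p)>0$ on that interval. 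The positivity of $\mu_{\Sigma\setminus\{a_1\}}(p)$ then allows us to divide identity~\eqref{eq:6} evaluated at~$p$ by $\mu_{\Sigma\setminus\{a_1\}}(p)$, recovering the second expression of~$r$, and to verify that $0\leqslant r<1$.

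Next, I would translate the event $\{\max(\xi)\subseteq T\}$ into a condition on the factors of the Proposition~\Rref{prop:3} decomposition. Writing $\Sigma'=\Sigma\setminus\{a_1\}$ and using the three cases provided by Proposition~\Rref{prop:3} ($k=0$; or $k\geqslant1$ with $u_k\in\M_{\Sigma\setminus\Lk(a_1)}$; or $k\geqslant1$ with $u_k\notin\M_{\Sigma\setminus\Lk(a_1)}$), together with the hypothesis $a_1\in T$, one checks in each case that $\max(\xi)\subseteq T$ is equivalent to $\max(u_k)\subseteq T\cap\Sigma'$; the $a_1$-pyramidality of the $u_i$ for $i<k$ is already part of the factorization and contributes no further constraint. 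This case analysis is the one mildly delicate step, and is the part I expect to be trickiest to write cleanly.

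The rest is a single bookkeeping computation. Writing each $a_1$-pyramidal factor as $u_i=z_i\cdot a_1$ with $z_i\in\M_{\Sigma'}$ and $\max(z_i)\subseteq\Lk(a_1)$, we have $\size\xi=k+\sum_{i<k}\size{z_i}+\size{u_k}$, so that for any admissible configuration
\[
B_{\Sigma,p}(\xi)=\mu_\Sigma(p)\,p^{k}\cdot\prod_{i<k}p^{\size{z_i}}\cdot p^{\size{u_k}}.
\]
Summing the $z_i$'s and $u_k$ independently over their respective domains and applying identity~\eqref{eq:1} inside the sub-monoid $\M_{\Sigma'}$ --- once with the subset $\Lk(a_1)\cap\Sigma'$, whose complement in $\Sigma'$ equals $\Sigma\setminus\Lk(a_1)$, and once with the subset $T\cap\Sigma'$ --- turns $p^{k}\cdot\G_{\Sigma',\Lk(a_1)\cap\Sigma'}(p)^{k}$ into~$r^k$. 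A final summation on $k$ produces a geometric series whose sum factors out of the conditioning, yielding $D_{\Sigma,T}(K=k)=(1-r)r^k$, which proves item~\Rref{item:3}.

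Dividing the joint density by this marginal and matching the remaining factors against the definition~\eqref{eq:10'} of $D_{\Sigma',\Lk(a_1)}$ and $D_{\Sigma',T}$ --- noting that for any $\U'\subseteq\Sigma$ and $\eta\in\M_{\Sigma'}$ the conditions $\max(\eta)\subseteq\U'$ and $\max(\eta)\subseteq\U'\cap\Sigma'$ are equivalent --- produces the claimed conditional product decomposition, thereby establishing the conditional independence of $U_0,\ldots,U_k$ and identifying each factor with the announced law.
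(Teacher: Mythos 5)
Your proposal is correct and takes essentially the same route as the paper: the paper merely packages the positivity of $\mu_{\Sigma\setminus\{a_1\}}(p)$ and the two expressions for $r$ into one lemma, and the product-form summation into another, before invoking Proposition~\Rref{prop:3} and identity~\eqref{eq:1} exactly as you do. Your explicit case analysis reducing $\max(\xi)\subseteq T$ to $\max(u_k)\subseteq T$ (using $a_1\in T$) is the same observation the paper uses implicitly when asserting that concatenation gives a bijection onto the set of traces with $k$ occurrences of $a_1$ and maximal letters contained in~$T$.
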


Aiming to prove Theorem~\Rref{thr:3b}, we first establish two lemmas.

\begin{lmm}
\label{lem:1}
Let\/ $\M_\Sigma$ be a trace monoid, let $a_1\in\Sigma$ and let $p\in(0,p_\Sigma)$.
Then, we have $\mu_{\Sigma\setminus\{a_1\}}(p)>0$.
Furthermore, let $\xi$ be a random element in $\M_\Sigma$ distributed according to~$B_{\Sigma,p}$\,.
Then:
\begin{gather}
\label{eq:16}
B_{\Sigma,p}\bigl(\size{\xi}{a_1}>0\bigr)=1-\frac{\mu_\Sigma(p)}{\mu_{\Sigma\setminus\{a_1\}}(p)}=p\frac{\mu_{\Sigma\setminus\Lk(a_1)}(p)}{\mu_{\Sigma\setminus\{a_1\}}(p)}\,.
\end{gather}
\end{lmm}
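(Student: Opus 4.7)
The plan is to split the statement into its three assertions and treat them in sequence, exploiting the identities already gathered earlier in the excerpt.

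First I would establish the positivity of $\mu_{\Sigma\setminus\{a_1\}}(p)$. Removing a letter only restricts the set of admissible traces, so coefficient-wise the generating series $G_{\Sigma\setminus\{a_1\}}(X)$ is dominated by $G_\Sigma(X)$. Hence the radius of convergence $p_{\Sigma\setminus\{a_1\}}$ of the former satisfies $p_{\Sigma\setminus\{a_1\}}\geqslant p_\Sigma$, so $p\in(0,p_{\Sigma\setminus\{a_1\}})$. Since $p_{\Sigma\setminus\{a_1\}}$ is by definition the smallest positive root of $\mu_{\Sigma\setminus\{a_1\}}$, and since $\mu_{\Sigma\setminus\{a_1\}}(0)=1>0$, continuity forces $\mu_{\Sigma\setminus\{a_1\}}(p)>0$ for $p\in(0,p_\Sigma)$.

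Next I would compute the probability directly via complementation. The event $\{\size{\xi}{a_1}=0\}$ is exactly $\{\xi\in\M_{\Sigma\setminus\{a_1\}}\}$, so by the explicit formula~\eqref{eq:7} and the closed form~\eqref{eq:5} applied to the sub-alphabet $\Sigma\setminus\{a_1\}$ (which is legitimate thanks to the first step), I obtain
\begin{gather*}
B_{\Sigma,p}\bigl(\size{\xi}{a_1}=0\bigr)=\sum_{x\in\M_{\Sigma\setminus\{a_1\}}}\mu_\Sigma(p)\,p^{\size{x}}=\mu_\Sigma(p)\,G_{\Sigma\setminus\{a_1\}}(p)=\frac{\mu_\Sigma(p)}{\mu_{\Sigma\setminus\{a_1\}}(p)},
\end{gather*}
and subtracting from $1$ yields the first equality in~\eqref{eq:16}.

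Finally, to obtain the alternative expression I would apply the Möbius recurrence~\eqref{eq:6}, which gives $\mu_{\Sigma\setminus\{a_1\}}(p)-\mu_\Sigma(p)=p\,\mu_{\Sigma\setminus\Lk(a_1)}(p)$; dividing by $\mu_{\Sigma\setminus\{a_1\}}(p)$ produces the second form. The only subtlety is the positivity step, since the rest is bookkeeping with the already-proven identities; but even that reduces to a monotonicity argument on coefficients, so no real obstacle is expected.
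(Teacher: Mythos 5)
Your proposal is correct and follows essentially the same route as the paper: compute $B_{\Sigma,p}\bigl(\size{\xi}{a_1}=0\bigr)=\mu_\Sigma(p)\,G_{\Sigma\setminus\{a_1\}}(p)=\mu_\Sigma(p)/\mu_{\Sigma\setminus\{a_1\}}(p)$ via~\eqref{eq:7} and~\eqref{eq:5}, take the complement, and get the second form from~\eqref{eq:6}. Your positivity step (radius-of-convergence comparison plus continuity from $\mu_{\Sigma\setminus\{a_1\}}(0)=1$) is only a cosmetic variant of the paper's, which instead reads $\mu_{\Sigma\setminus\{a_1\}}(p)>0$ directly off the identity $G_{\Sigma\setminus\{a_1\}}(p)=1/\mu_{\Sigma\setminus\{a_1\}}(p)$ once $G_{\Sigma\setminus\{a_1\}}(p)\leqslant G_\Sigma(p)<\infty$ is observed.
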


\begin{proof}
Since $p \in (0,p_\Sigma)$, we first observe that $G_{\Sigma\setminus\{a_1\}}(p)\leqslant G_\Sigma(p) < \infty$.
Thus, the formula~\eqref{eq:5}, applied with $\Sigma\setminus\{a_1\}$ instead of~$\Sigma$, proves that $\mu_{\Sigma\setminus\{a_1\}}(p)>0$.

Then, let $r=B_{\Sigma,p}\bigl(\size{\xi}{a_1}>0\bigr)$.
Taking into account~\eqref{eq:7} and then applying~\eqref{eq:5} with the trace monoid $\M_{\Sigma\setminus\{a_1\}}$ indicates that
\begin{gather*}
1-r=B_{\Sigma,p}\bigl(\size{\xi}{a_1}=0\bigr)=\sum_{x\in\M_{\Sigma\setminus\{a_1\}}} \mu_\Sigma(p) p^{\size x}=\frac{\mu_\Sigma(p)}{\mu_{\Sigma\setminus\{a_1\}}(p)}\,,
\end{gather*}
which is the left equality in~\eqref{eq:16}.
Finally, the right equality in~\eqref{eq:16} derives directly from the identity~\eqref{eq:6}.
\end{proof}

\begin{lmm}
\label{lem:2}
Let $L_0,L_1,\ldots,L_k$ be subsets of $\M_\Sigma$, and let ${\varphi \colon L_0 \times \ldots \times L_k \to \M_\Sigma}$ be defined by
$\varphi(\ell_0,\ldots,\ell_k) = \ell_0 \cdot \ldots \cdot \ell_k$.
Assume that $\varphi$ is injective, and let $H$ be the set $\varphi(L_0 \times \ldots \times L_k)$.
Then, for all $p \in (0,p_\Sigma)$, we have
\begin{gather*}
B_{\Sigma,p}(H) = \mu_\Sigma(p)^{-k} \prod_{i=0}^k B_{\Sigma,p}(L_i).
\end{gather*}
\end{lmm}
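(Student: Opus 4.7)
The plan is to unfold the definition of $B_{\Sigma,p}$ from~\eqref{eq:7} and exploit two elementary facts: the injectivity of $\varphi$, which lets us rewrite $B_{\Sigma,p}(H)$ as a single sum indexed by the product $L_0 \times \ldots \times L_k$ without double-counting, and the additivity of the length function under concatenation, $|\ell_0 \cdot \ldots \cdot \ell_k| = |\ell_0| + \ldots + |\ell_k|$, which turns $p^{|\ell_0 \cdot \ldots \cdot \ell_k|}$ into a product $p^{|\ell_0|} \cdots p^{|\ell_k|}$. Once these are in place, the sum separates into a product of $k+1$ independent sums.

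More concretely, I would first write
\[
B_{\Sigma,p}(H) = \sum_{x \in H} B_{\Sigma,p}(x) = \sum_{(\ell_0,\ldots,\ell_k) \in L_0 \times \ldots \times L_k} B_{\Sigma,p}(\ell_0 \cdot \ldots \cdot \ell_k),
\]
where the second equality uses injectivity of $\varphi$. Then I would plug in $B_{\Sigma,p}(\ell_0 \cdot \ldots \cdot \ell_k) = \mu_\Sigma(p)\, p^{|\ell_0|+\ldots+|\ell_k|}$ and factor the sum as
\[
\mu_\Sigma(p) \prod_{i=0}^{k} \Bigl( \sum_{\ell_i \in L_i} p^{|\ell_i|} \Bigr).
\]
Finally, each inner sum equals $\mu_\Sigma(p)^{-1} B_{\Sigma,p}(L_i)$ by~\eqref{eq:7}, so pulling out the $k+1$ factors of $\mu_\Sigma(p)^{-1}$ and combining with the prefactor $\mu_\Sigma(p)$ leaves exactly $\mu_\Sigma(p)^{-k} \prod_{i=0}^{k} B_{\Sigma,p}(L_i)$.

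There is no real obstacle here; the lemma is essentially a bookkeeping statement expressing that $B_{\Sigma,p}$ behaves, up to the normalizing factor $\mu_\Sigma(p)$, like an independent product measure on concatenation-decomposable subsets. The only two points worth stating explicitly in the write-up are the use of injectivity of $\varphi$ (so that the sum over $H$ is a genuine sum over tuples) and the fact that all involved series converge absolutely since $p \in (0,p_\Sigma)$, which justifies the factorization of the sum into a product.
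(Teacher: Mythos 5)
Your proposal is correct and follows exactly the same route as the paper's proof: unfold $B_{\Sigma,p}$ via~\eqref{eq:7}, use injectivity of $\varphi$ and additivity of the length to rewrite the sum over $H$ as a sum over tuples, then factor it into the product $\mu_\Sigma(p)\prod_{i=0}^k\bigl(\mu_\Sigma(p)^{-1}B_{\Sigma,p}(L_i)\bigr)$. Nothing is missing.
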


\begin{proof}
Since $\varphi$ is injective and the length function $\size{\cdot}$ is additive, and according to~\eqref{eq:7}, we have:
\begin{align*}
B_{\Sigma,p}(H) & = \sum_{x \in H} \mu_\Sigma(p) p^{\size{x}}
= \sum_{(\ell_0,\dots,\ell_k)\in L_0\times\dots\times L_k} \mu_\Sigma(p) p^{\size{\ell_0}}\cdot\ldots\cdot p^{\size{\ell_k}} \\
& = \mu_\Sigma(p)\cdot\prod_{i=0}^k\Bigl(\sum_{\ell\in L_i}p^{\size{\ell}}\Bigr)
= \mu_\Sigma(p)\cdot\prod_{i=0}^k \Bigl(\mu_\Sigma(p)^{-1} B_{\Sigma,p}(L_i)\Bigr). \qedhere
\end{align*}
\end{proof}

\begin{proof}[Proof of Theorem~\Rref{thr:3b}.]
Lemma~\Rref{lem:1} proves that $\mu_{\Sigma\setminus\{a_1\}}(p)>0$, hence that the real number $r$ in~\eqref{eq:4} is well-defined, and that the identity in~\eqref{eq:4} holds.

Assume now that $a_1 \in T$, and let $L=\{x\in\M_{\Sigma\setminus\{a_1\}}\tq\max(x)\subseteq T\}$\,.
Given an integer $k \geqslant 0$, Proposition~\Rref{prop:3} means that the mapping ${\varphi \colon \Pyr{a_1}^k \times L\to\M_\Sigma}$ defined by $\varphi(\ell_0,\dots,\ell_k)=\ell_0\cdot\ldots\cdot \ell_k$ is a bijection from $\Pyr{a_1}^k \times L$ onto the subset $\U_k$ of $\M_\Sigma$ defined by:
\begin{gather*}
\U_k=\bigl\{\xi\in\M_\Sigma\tq |\xi|_{a_1}= k\text{ and }\max(\xi)\subseteq T\bigr\}.
\end{gather*}

The definition~\eqref{eq:7} of~$B_{\Sigma,p}$\,, the characterization~\eqref{eq:11} of~$\Pyr{a_1}$, the formula~\eqref{eq:1} and the value for $r$ given in~\eqref{eq:4} indicate that:
\begin{align*}
B_{\Sigma,p}(\Pyr{a_1})&=\mu_\Sigma(p)\cdot\left(\sum_{{z\in\M_{\Sigma\setminus\{a_1\}}\tq
\max(z)\subseteq\Lk(a_1)}}p^{\size{z\cdot a_1}}\right)\\
&=\mu_\Sigma(p)\frac{p \, \mu_{\Sigma\setminus\Lk(a_1)}(p)}{\mu_{\Sigma\setminus\{a_1\}}(p)}=r\mu_\Sigma(p)\,.
\end{align*}

Therefore, applying Lemma~\Rref{lem:2}:
\begin{gather}
\label{eq:27}
B_{\Sigma,p}(\U_k)=\mu_\Sigma(p)^{-k} \bigl(B_{\Sigma,p}(\Pyr{a_1})\bigr)^k B_{\Sigma,p}(L) = r^k B_{\Sigma,p}(L)\,.
\end{gather}
Since $D_{\Sigma,T}(K=k)$ is proportional to~$B_{\Sigma,p}(\U_k)$, the result of point~\Rref{item:3} follows.

Furthermore, Proposition~\Rref{prop:3} proves that, for all $u_0,\dots,u_{k-1}\in\Pyr{a_1}$ and $u_k \in L$:
\begin{align*}
B_{\Sigma,p}(U_0=u_0,\dots,U_K=u_k\;\big|\;K=k\bigr)
& = \frac{B_{\Sigma,p}(u_0 \cdot \ldots \cdot u_k)}{B_{\Sigma,p}(\U_k)} \\
& = \frac{\mu_\Sigma(p)}{B_{\Sigma,p}(\U_k)}\cdot p^{\size{u_0}}\cdot\ldots\cdot p^{\size{u_k}}\,.
\end{align*}

The above conditional law has a product form, which shows that the $U_i$ are independent random variables.
It also shows that the laws of $U_i \cdot a_1^{-1}$ when $i \leqslant k-1$, whose support is $\{\xi \in M_{\Sigma\setminus\{a_1\}} \colon \max(\xi) \subseteq \Lk(a_1)\}$, are all proportional to~$p^{\size{u_i \cdot a_1^{-1}}}$; 
this means that $U_i \cdot a_1^{-1}$ is distributed according to $D_{\Sigma\setminus\{a_1\},\Lk(a_1)}$.
Similarly, the law of $U_k$, whose support is $\{\xi \in M_{\Sigma\setminus\{a_1\}} \colon \max(\xi) \subseteq T\}$, is proportional to~$p^{\size{u_k}}$, which means that $U_k$ is distributed according to $D_{\Sigma\setminus\{a_1\},T}$.
\end{proof}

\section{Random generation of traces}
\label{sec:rand-gener-finite}

\subsection{Random generation of finite traces}
\label{sec:rand-gener-finite-1}

As a first task, we consider the random generation of finite traces according to a probability distribution~$B_{\Sigma,p}$ for~$p\in(0,p_\Sigma)$.
In view of Proposition~\Rref{prop:3} and of Theorem~\Rref{thr:3b}, we aim at generating random pyramidal traces.
Since $a_1$-pyramidal traces are traces $\xi \cdot a_1$ satisfying $\max(\xi)\subseteq\Lk(a_1)$, it is natural to introduce constraints of the form $\{\max(\xi)\subseteq T\}$ for non-empty subsets $T$ of~$\Sigma$.
And since $a_1$-pyramidal traces have only one occurrence of~$a_1$, the core of the generation concerns traces upon the alphabet $\Sigma\setminus\{a_1\}$.
Hence, our procedure is recursive on the size of the alphabet.
We obtain the following result.

\begin{thrm}
\label{thr:1}
Let $\M_\Sigma$ be a trace monoid.
Let $S$ and $T$ be subsets of\/~$\Sigma$, and let $p \in (0,p_S)$ be a real number.
Algorithm~1 below is a random recursive algorithm which, provided with the input~$(p,S,T)$, outputs an element~$\xi \in \M_\Sigma$ distributed according to the law~$D_{S,T}$, \ie to the law~$B_{S,p}( \cdot \mid \max(\xi) \subseteq T)$.

Furthermore, assume that each real number $\mu_X(p)$ has been precomputed;
that choosing an element in the intersection of two subsets of\/~$\Sigma$, or performing a function call, variable assignation and multiplication in~$\M_\Sigma$ takes a constant number of steps;
and that the call to a routine outputting a random integer~$X$ takes a number of steps bounded by~$X$.
Then, Algorithm~1 outputs an element~$\xi$ of\/~$\M_\Sigma$ in~$\mathcal{O}(\size{\Sigma} \bigl(\size{\xi}+1)\bigr)$ steps.
\end{thrm}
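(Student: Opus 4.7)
The natural approach is induction on $\size{S}$, mirroring the recursive structure of Algorithm~1. The degenerate case $S \cap T = \emptyset$ is handled separately: any $\xi \in \M_S$ satisfying $\max(\xi) \subseteq T$ must have $\max(\xi) = \emptyset$, hence $\xi = \unit$, so the algorithm returns the empty trace in $O(1)$ steps, which is within the claimed bound. For the recursive case with $S \cap T \neq \emptyset$, Algorithm~1 picks a letter $a_1 \in S \cap T$, uses the precomputed values of $\mu_{S\setminus\{a_1\}}(p)$ and $\mu_{S\setminus\Lk(a_1)}(p)$ to compute the parameter $r$ of equation~\eqref{eq:4} in constant time, draws $K$ from the geometric distribution of parameter $r$, makes $K$ recursive calls on input $(p, S\setminus\{a_1\}, \Lk(a_1))$ to produce $V_0, \ldots, V_{K-1}$ and one further recursive call on input $(p, S\setminus\{a_1\}, T)$ to produce $U_K$, and outputs $\xi = V_0 \cdot a_1 \cdot V_1 \cdot a_1 \cdots V_{K-1} \cdot a_1 \cdot U_K$.

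Correctness then follows directly from Theorem~\Rref{thr:3b}. The geometric law of $K$ matches item~\Rref{item:3}; the conditional independence of the factors matches item~\Rref{item:8}; and the inductive hypothesis ensures that the recursive calls yield $V_i$ distributed according to $D_{S\setminus\{a_1\}, \Lk(a_1)}$ and $U_K$ distributed according to $D_{S\setminus\{a_1\}, T}$, matching item~\Rref{item:9} and the third sub-item of item~\Rref{item:4}. Re-assembling the pyramidal factors $V_i \cdot a_1$ together with the final factor $U_K$ as prescribed by Proposition~\Rref{prop:3} thus produces a trace distributed exactly according to $D_{S,T}$.

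For the complexity, let $f(S, \xi)$ denote the number of steps taken to produce output $\xi$. The local work consists of computing $r$ in $O(1)$ (thanks to the precomputation hypothesis on the Möbius values), sampling $K$ in $O(K)$ steps by hypothesis, and assembling the output in $O(K)$ further steps. The recursive calls, by the inductive bound, contribute at most $O((\size{S}-1)(\size{V_i}+1))$ each, and $O((\size{S}-1)(\size{U_K}+1))$ for the last one. Summing and using the telescoping identity
\[
\sum_{i=0}^{K-1}(\size{V_i}+1) + (\size{U_K}+1) = \size\xi + 1,
\]
the total is bounded by $O((\size{S}-1)(\size\xi+1)) + O(K) = O(\size{S}(\size\xi+1))$, since $K \leqslant \size\xi$. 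As $\size{S} \leqslant \size\Sigma$ throughout the recursion, this yields the claimed bound. The only real subtlety is this telescoping identity: without it, the $K+1$ recursive calls would appear to compound multiplicatively, whereas it is precisely what keeps the overall cost linear in $\size\xi$ rather than quadratic.
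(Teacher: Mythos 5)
Your proposal is correct and follows essentially the same route as the paper: induction on $\size{S}$, correctness of the output law via Theorem~\Rref{thr:3b}, and the complexity bound via the telescoping identity $\sum_{i}(\size{v_i}+1)=\size{\xi}+1$ together with $K\leqslant\size{\xi}$. The only cosmetic difference is that the paper fixes an explicit constant $\kappa$ and proves $f(n,k)\leqslant\kappa(n+1)(k+1)$ by induction, which is the rigorous form of your use of $\mathcal{O}(\cdot)$ inside the inductive step (needed so the hidden constant does not grow with the recursion depth).
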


\begin{algorithm}
 \label{algo:1}
 \caption{Outputs~$\xi\in\M_S$ distributed according to~$D_{S,T}$}
\begin{algorithmic}[1]
\Require{Real parameter $p \in (0,p_S)$, subsets~$S$ and~$T$ of~$\Sigma$}
\If{$S \cap T = \emptyset$}
\State
\Return{$\unit$}\Comment{$\unit$ is the unit element of the monoid}
\Else
\State
\textbf{choose}~$a_1 \in S \cap T$
\State
$r\gets{1-\mu_S(p)/\mu_{S\setminus\{a_1\}}(p)}$
\State
$K\gets{\mathcal{G}(r)}$\Comment{Random integer with a geometric law}
\State
$\xi\gets\unit$\Comment{Initialization with the unit element of the monoid}
\For{$i=0$ to~$K-1$}
\State
$v_i\gets\text{output of Algorithm~1 on input } (p,S \setminus\{a_1\},\Lk(a_1))$
\State
$\xi\gets{\xi\cdot v_i\cdot a_1}$
\EndFor
\State
$v_K\gets\text{output of Algorithm~1 on input } (p,S \setminus\{a_1\},T)$
\State
$\xi\gets \xi\cdot v_K$
\State
\Return{$\xi$}
\EndIf
\end{algorithmic}
\end{algorithm}

Whereas the precomputation will appear as a bottleneck of this approach, our other assumptions are rather mild.
For instance, when~$|\Sigma| \leqslant 64$, subsets of $\Sigma$ are typically encoded via $64$-bit masks;
hence, both computing the intersection of two subsets and selecting the least element of a non-empty set amount to performing one negation and two bit-wise \& operations.
For sets $\Sigma$ of larger size, however, one may simply multiply by $|\Sigma|$ the running time of Algorithm~1 and of all subsequent algorithms:
given some ordering of $\Sigma$, the element $a_1$ chosen in line~4 will be the least element of $S \cap T$.

Similarly, if the user simply accepts obtaining an arbitrary factorization of the trace $\xi$ she is choosing at random, representing trace by linked lists with elements in $\Sigma$ allows multiplying elements of $\M_\Sigma$ in constant time.

\begin{proof}
We proceed by induction on~$|S|$.
When $S \cap T = \emptyset$, the result is immediate.
Hence, we assume that $S \cap T$ contains an element~$a_1$, and that the result holds for any subset of~$S \setminus \{a_1\}$.

First, the random integer $K$ follows a geometric law whose parameter $r$ is given by~\eqref{eq:4}.
Then, once $K$ is set to a given integer~$k$, the random traces $v_0,\ldots,v_k$ are sampled independently of each other, according to the laws prescribed by Theorem~\Rref{thr:3b}.
Consequently, the random trace $\xi$ that is output by Algorithm~1 follows the law $D_{S,T}$, as intended.

Second, let $f(n,k)$ be the maximal number of steps executed by Algorithm~1 on input $(S,T)$ when outputting an element~$\xi$, where $n = \size{S}$ and $k = \size{\xi}$.
Under the assumptions of Theorem~\Rref{thr:1}, there exists a constant $\kappa$ such that, when drawing an integer~$K$ and traces $v_0,\ldots,v_K$, Algorithm~1 requires at most
\begin{gather}
 \label{eq:8}
\kappa(K+1) + \sum_{i=0}^K f(n-1,|v_i|) 
\end{gather}
steps.
Up to increasing~$\kappa$, we also safely assume that $f(0,k) \leqslant \kappa$ when $n = 0$, since in that case $S = T = \emptyset$.

In those conditions, we prove by induction on $n\geq0$ that
\[f(n,k) \leqslant \kappa(n+1)(k+1)\]
for all integers~$k$.
The result is immediate if $n = 0$. Provided that our induction hypothesis holds for~$n-1$, we assume the output of a trace $\xi$ with $k=|\xi|$ and $K=|\xi|_{a_1}$. Then $K\leq k$ on the one hand, and $k+1= \sum_{i=0}^K (|v_i|+1)$ on the other hand. Using both~(\Rref{eq:8}) and the induction hypothesis to bound $f(n,k)$ yields thus:
\[
\frac{f(n,k)}{\kappa}
\leqslant (K+1) + \sum_{i=0}^K n(|v_i|+1) \leqslant (k+1) + n (k+1) \leqslant (n+1)(k+1),\]
completing the induction step and the proof of Theorem~\Rref{thr:1}.
\end{proof}

\subsection{Random generation of finite traces, with rejection}
\label{sec:rand-gener-finite-2}

One weakness of Algorithm~1 is that, as mentioned in Theorem~\Rref{thr:1}, implementing it efficiently may require precomputing and storing exponentially many real numbers~$\mu_X(p)$.
Hence, we investigate alternative algorithms, which would require precomputing $|\Sigma|$ real numbers $\mu_X(p)$ only.

\begin{thrm}
\label{thr:1b}
Let $\M_\Sigma$ be a trace monoid, and let $a_1,a_2,\ldots,a_n$ be the elements of\/~$\Sigma$.
Let $k$ and $\ell$ be two integers such that ${0 \leqslant k < \ell \leqslant n}$; let\/ $\Sigma_k$ be the set\/ $\{a_1,a_2,\ldots,a_k\}$, and let $p \in (0,p_{\Sigma_k})$ be a real number.
Algorithm~2 below is a random recursive algorithm which, provided with the input~$(p,k,\ell)$, outputs an element~$\xi \in \M_\Sigma$ distributed according to the law~$D_{\Sigma_k,\Lk(a_\ell)}$, \ie to the law $B_{\Sigma_k,p}( \cdot \mid \max(\xi) \subseteq \Lk(a_\ell))$.

Furthermore, let $S(k,\ell)$ be the number of steps required by one execution of Algorithm~2 when outputting a trace $\xi$ on input $(p,k,\ell)$, and let $L(k,\ell)$ be the length of that trace~$\xi$.
Under the same assumptions as in Theorem~\Rref{thr:1}, the random variables $S(k,\ell)$ and $L(k,\ell)$ have finite variances, and their averages satisfy the relations
\begin{align}
\mathbb{E}[S(k,\ell)] & = \mathcal{O}\left(\frac{k}{\mu_{\Sigma_k}(p)}\right)
\text{ and} \label{eqn:S}\\
\mathbb{E}[L(k,\ell)] & = p \left(\frac{\mu'_{\Sigma_k \setminus \Lk(a_\ell)}(p)}{\mu_{\Sigma_k \setminus \Lk(a_\ell)}(p)} - \frac{\mu'_{\Sigma_k}(p)}{\mu_{\Sigma_k}(p)}\right).\label{eqn:L}
\end{align}
\end{thrm}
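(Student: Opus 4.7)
Algorithm~2 is obtained from Algorithm~1 by forcing the choice $a_1 = a_k$ at every recursive step, so that the only Möbius quantities ever needed are the precomputed $\mu_{\Sigma_0}(p), \dots, \mu_{\Sigma_n}(p)$. Concretely, on input $(p,k,\ell)$, the algorithm samples $K \sim \mathcal{G}(r_k)$ with $r_k = 1 - \mu_{\Sigma_k}(p)/\mu_{\Sigma_{k-1}}(p)$, issues $K$ recursive calls $(p,k-1,k)$ to generate the pyramidal parts and one recursive call $(p,k-1,\ell)$ for the tail, concatenates the results while inserting the letter $a_k$ between consecutive pyramidal parts, and accepts the output only if $\max(\xi) \subseteq \Lk(a_\ell)$.

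\textbf{Correctness.} I proceed by strong induction on $k$, the case $k=0$ being immediate. For $k \geqslant 1$, if $a_k \in \Lk(a_\ell)$, Theorem~\Rref{thr:3b} applies directly with $T = \Lk(a_\ell)$ and no rejection is ever triggered. If $a_k \notin \Lk(a_\ell)$, I invoke Theorem~\Rref{thr:3b} with $T' = \{a_k\} \cup \Lk(a_\ell)$: the tail factor returned by the recursive call $(p,k-1,\ell)$ still has the right law, because $D_{\Sigma_{k-1},T'}$ and $D_{\Sigma_{k-1},\Lk(a_\ell)}$ coincide (the letter $a_k$ lies outside $\Sigma_{k-1}$). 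Thus the pre-rejection output has law $D_{\Sigma_k,T'}$, and rejecting those samples for which $a_k \in \max(\xi)$ restricts this law to the event $\{\max(\xi) \subseteq \Lk(a_\ell)\}$, which is exactly $D_{\Sigma_k,\Lk(a_\ell)}$.

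\textbf{Expected length and finite variances.} Formula~\eqref{eqn:L} follows from the standard probability-generating-function identity $\mathbb{E}_{D_{\Sigma_k,T}}\bigl[\,|\xi|\,\bigr] = p \cdot (\log G_{\Sigma_k,T})'(p)$, combined with the closed form $G_{\Sigma_k,T}(X) = \mu_{\Sigma_k \setminus T}(X)/\mu_{\Sigma_k}(X)$ from~\eqref{eq:1}, specialised to $T = \Lk(a_\ell)$ and followed by logarithmic differentiation. The variance of $L(k,\ell)$ is finite because $G_{\Sigma_k,T}$ has radius of convergence $p_{\Sigma_k} > p$, so that $\mathbb{E}[|\xi|^2] < \infty$.

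\textbf{Complexity bound and main obstacle.} For~\eqref{eqn:S} I analyse the recursion tree. In the rejection-free situation ($a_k \in \Lk(a_\ell)$), the argument used in the proof of Theorem~\Rref{thr:1} yields a pathwise bound $S(k,\ell) = \mathcal{O}(k(|\xi|+1))$; taking expectations and observing that the dominant contribution to $\mathbb{E}[L(k,\ell)]$ from~\eqref{eqn:L} is $p\mu'_{\Sigma_k}(p)/\mu_{\Sigma_k}(p)$, while $\mu'_{\Sigma_k}(p)$ stays bounded, produces $\mathbb{E}[S(k,\ell)] = \mathcal{O}(k/\mu_{\Sigma_k}(p))$. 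The main obstacle is the rejection analysis: when $a_k \notin \Lk(a_\ell)$, each failed trial wastes a whole batch of recursive generations, and both the expected number of trials and its second moment must be controlled. A short computation using~\eqref{eq:1} shows that the per-attempt acceptance probability equals $\mu_{\Sigma_k \setminus \Lk(a_\ell)}(p)/\mu_{\Sigma_{k-1} \setminus \Lk(a_\ell)}(p)$; amortising this factor against the cost of each recursive batch and proceeding by induction on $k$ preserves the bound $\mathcal{O}(k/\mu_{\Sigma_k}(p))$ and yields the finiteness of the variance of $S(k,\ell)$.
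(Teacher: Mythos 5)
Your correctness argument and your treatment of $L(k,\ell)$ are essentially the paper's: the paper also reduces Algorithm~2 to the variant you describe (generate a $D_{\Sigma_k,T}$-sample with $T=(\Lk(a_\ell)\cap\Sigma_k)\cup\{a_k\}$ via Theorem~\Rref{thr:3b}, then condition on $\max(\xi)\subseteq\Lk(a_\ell)$), and it derives~\eqref{eqn:L} and the finite variance of $L(k,\ell)$ from $G_{\Sigma_k,\Lk(a_\ell)}=\mu_{\Sigma_k\setminus\Lk(a_\ell)}/\mu_{\Sigma_k}$ exactly as you do. The genuine gap is the complexity bound~\eqref{eqn:S}, i.e.\ precisely the step you flag as ``the main obstacle'' and then do not carry out. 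The algorithm you analyse rejects the \emph{whole} output, so each failed attempt wastes the $K+1$ recursive generations. With your (correct) per-attempt acceptance probability $q=\mu_{\Sigma_k\setminus\Lk(a_\ell)}(p)/\mu_{\Sigma_{k-1}\setminus\Lk(a_\ell)}(p)$, the one-step cost recursion is of the form $\mathbb{E}[S(k,\ell)]\approx\tfrac1q\bigl(\mathcal{O}(1)+\mathbb{E}[S(k-1,\ell)]+\tfrac{r}{1-r}(\mathcal{O}(1)+\mathbb{E}[S(k-1,k)])\bigr)$; the factors $1/(1-r)=\mu_{\Sigma_{k-1}}(p)/\mu_{\Sigma_k}(p)$ telescope over the levels to $1/\mu_{\Sigma_k}(p)$, but the extra factors $1/q$ do not (the conditioning sets $\Lk(a_\ell)$ and $\Lk(a_k)$ change between the recursive calls), so ``amortising this factor \ldots preserves the bound'' is an assertion, not a proof, and for your variant the bound $\mathcal{O}(k/\mu_{\Sigma_k}(p))$ is not established (and is doubtful in general).

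The paper's Algorithm~2 is designed exactly to avoid this: by Proposition~\Rref{prop:3}, whether $\max(\xi)\subseteq\Lk(a_\ell)$ holds depends only on whether $K=0$, on whether $a_k\in\Lk(a_\ell)$, and on whether the tail $v_\infty$ lies in $\M_{\Sigma_{k-1}\setminus\Lk(a_k)}$ --- not on the pyramidal factors themselves. So the rejection loop resamples only the pair $(R,v_\infty)$, the $K$ pyramid generations are performed once, after acceptance, and $r$ is set to $0$ when the connected component of $a_k$ in $(\Sigma_k,\R)$ misses $\Lk(a_\ell)$. With this structure, writing the one-step cost equation with the rejection event $\mathcal{E}$ makes $\mathbb{P}[\mathcal{E}]$ cancel exactly, giving $\mathbb{E}[S(k,\ell)]\leqslant\bigl(\mathcal{O}(1)+\max\{\mathbb{E}[S(k-1,k)],\mathbb{E}[S(k-1,\ell)]\}\bigr)/(1-r)$, which telescopes to~\eqref{eqn:S} using the monotonicity $\mu_{\Sigma_j}(p)\geqslant\mu_{\Sigma_k}(p)$ for $j\leqslant k$. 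Two secondary weaknesses: your shortcut for the no-rejection case rests on the unjustified claim that $\mu'_{\Sigma_k}(p)$ ``stays bounded'' (the paper never needs to bound $\mathbb{E}[L]$ to get~\eqref{eqn:S}), and the finite variance of $S(k,\ell)$ is asserted rather than argued (the paper gives a short induction using the geometric number of loop iterations and i.i.d.\ per-iteration costs). The missing rejection analysis, however, is the real hole.
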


\newsavebox{\rc}\savebox{\rc}{Recursive call:}
\newlength{\rcw}\settowidth{\rcw}{\usebox{\rc}}

\begin{algorithm}
\label{algo:1b}
\caption{Outputs~$\xi\in\M_{\Sigma_k}$ distributed according to~$D_{\Sigma_k,\Lk(a_\ell)}$}
\normalfont
\begin{algorithmic}[1]
\Require{Real parameter $p \in (0,p_{\Sigma_k})$, integers $0 \leqslant k < \ell \leqslant n$}
\If{$\Sigma_k \cap \Lk(a_\ell) = \emptyset$}\label{l1}
\State \Return $\unit$\label{l2}
\Comment{$\unit$ is the unit element of the monoid}
\Else
\State $\xi\gets\unit$
\State $\Sigma_k' \gets$ connected component of $a_k$ in the graph $(\Sigma_k,\R)$\label{line:sigmak'}
\State $r \gets
\begin{cases}
 \bigl(1-\mu_{\Sigma_k}(p)/\mu_{\Sigma_{k-1}}(p)\bigr)&\text{if $\Sigma'_k\cap\Lk(a_\ell)\neq\emptyset$}\\
 0&\text{if $\Sigma'_k\cap\Lk(a_\ell)=\emptyset$}
\end{cases}$
\label{line:rmu}
\Repeat{:}\label{line:while1b}
\State $R \gets \mathcal{B}(r)$\label{lineR}
\Comment{Random integer with Bernoulli law}
\State \label{line:endwhile1b}$v_\infty\gets$ 
\begin{minipage}[t]{.3\textwidth}
 output of Algorithm~2\\ on input $(p,k-1,\ell)$
\end{minipage}
\Comment{\begin{minipage}[t]{\rcw}
 \usebox{\rc}\\
 \strut\hfill\llap{$\max(v_\infty)\subseteq\Lk(a_\ell)$}
 \end{minipage}}
 \Until $R = 0$ or $\bigl(a_k \in \Lk(a_\ell)$ or $v_\infty \notin \M_{\Sigma_{k-1} \setminus \Lk(a_k)}\bigr)$
 \Comment{Rejection step}
\If{$R = 1$}\label{line:ifr1}
\State $K \gets 1+\mathcal{G}(r)$\Comment{Random integer with geometric law, shifted by $1$}
\Else
\State $K \gets 0$
\EndIf
\For{$i=1$ to~$K$}\label{line:for1b}
\State $v_i\gets$ output of Algorithm~2 on input $(p,k-1,k)$
\State $\xi\gets \xi\cdot v_i\cdot a_k$\label{line:endfor1b}
\EndFor
\State $\xi\gets \xi\cdot v_\infty$\label{line:finishxi}
\State \Return{$\xi$}
\EndIf
\end{algorithmic}
\end{algorithm}

\begin{proof}
We first prove the correctness of Algorithm~1 by induction on $k$.
Since the result is immediate when the set $\Sigma_k \cap \Lk(a_\ell)$ is empty, we assume that it is not empty, and that $k \geqslant 1$.

If $\Sigma_k\cap\Lk(a_\ell)=\emptyset$, then $\unit$ is the only trace $\xi\in\M_{\Sigma_k}$ satisfying $\max(\xi)\subseteq\Lk(a_\ell)$, whence the lines~\Rref{l1} and~\Rref{l2}.

We assume thus that $\Sigma_k\cap\Lk(a_\ell)\neq\emptyset$. Observe that the submonoids $\M_{\Sigma'_k}$ and $\M_{\Sigma_k \setminus \Sigma'_k}$ commute with each other, where $\Sigma'_k$ is defined at line~\Rref{line:sigmak'}. Hence, if $\Sigma'_k\cap\Lk(a_\ell)=\emptyset$, every trace $\xi \in \M_{\Sigma_k}$ such that $\max(\xi) \subseteq \Lk(a_\ell)$ must belong to the submonoid $\M_{\Sigma_k \setminus \Sigma'_k}$.
In particular, such a trace $\xi$ contains no occurrence of the letter~$a_k$, and the factorization into $a_k$-pyramids and a trace in $\M_{\Sigma_{k-1}}$ given in Proposition~\Rref{prop:3} is reduced to its rightmost factor; in turn, the algorithm goes as follows in that case: $r\gets0$ at line~\Rref{line:rmu}, $R\gets0$ at line~\Rref{lineR} and $\xi\gets v_\infty$ at line~\Rref{line:finishxi}. By the induction hypothesis, the trace output is distributed according to the law $D_{\Sigma_{k-1},\Lk(a_\ell)}$, which coincides with $D_{\Sigma_k,\Lk(a_\ell)}$ in that case.

On the contrary, assume that $\Sigma'_k \cap \Lk(a_\ell) \neq \emptyset$. The trace $v_\infty\in\M_{\Sigma_{k-1}}$ defined at line~\Rref{line:endwhile1b} already satisfies $\max(v_\infty)\subseteq\Lk(a_\ell)$; therefore, it follows from Proposition~\Rref{prop:3} that $v_\infty$~is \emph{not} eligible as a rightmost factor of a factorization into $K$ $a_k$-pyramids of a trace $\xi$ such that $\max(\xi)\subseteq\Lk(a_\ell)$ if and only if: $K>0$ and $a_k\notin\Lk(a_\ell)$ and $v_\infty\in\M_{\Sigma_{k-1}\setminus\Lk(a_k)}$.

Henceforth, Algorithm~2 outputs the same random trace $\xi$ as its following inefficient variant, which we call Algorithm~2b, where the lines~\Rref{line:ifr1} to~\Rref{line:finishxi} of the original algorithm have been included into the \textbf{repeat until} loop:
\begin{algorithmic}[1]
\setcounter{ALG@line}{6}
\Repeat{:}
\State $R \gets \mathcal{B}(r)$
\State $v_\infty\gets$ output of Algorithm~2 on input $(p,k-1,\ell)$
\If{$R = 1$}
\State $K \gets 1+\mathcal{G}(r)$
\Else
\State $K \gets 0$
\EndIf
\For{$i=1$ to~$K$}\label{line:fork}
\State $v_i\gets$ output of Algorithm~2 on input $(p,k-1,k)$
\State $\xi\gets \xi\cdot v_i\cdot a_k$
\EndFor
\State $\xi\gets \xi\cdot v_\infty$\label{line:xiend}
\Until $\max(\xi\cdot v_\infty)\subseteq\Lk(a_\ell)$
\label{line:while-test}
\State\Return{$\xi$}\label{returnfinal}
\end{algorithmic}

In line~\Rref{line:fork} of Algorithm~2b, the random integer $K$ follows a geometric law of parameter $r$.
Consequently, and according to Theorem~\Rref{thr:3b} applied with the sets $S = \Sigma_k$ and $T = (\Lk(a_\ell) \cap \Sigma_k) \cup \{a_k\}$, the random trace $\xi$ obtained in line~\Rref{line:xiend} of Algorithm~2 is distributed according to the law~$D_{\Sigma_k,T}$.
And the trace output at line~\Rref{returnfinal} is distributed according to the law $D_{\Sigma_k,T}\bigl(\cdot \mid \max(\xi) \subseteq \Lk(a_\ell)\bigr)$, \ie to the law $D_{\Sigma_k,\Lk(a_\ell)}$.

\medskip

Second, the law of the random variable $L(k,\ell)$ is entirely described by the generating series $G_{\Sigma_k,\Lk(a_\ell)}$.
More precisely, let $X$ be the set of traces $\xi \in \M_{\Sigma_k}$ such that $\max(\xi) \subseteq \Lk(a_\ell)$: we have
\[\mathbb{E}[L(k,\ell)] = \frac{\sum_{\xi \in X} |\xi| p^{|\xi|}}{\sum_{\xi \in X} p^{|\xi|}} = p \frac{G'_{\Sigma_k,\Lk(a_\ell)}(p)}{G_{\Sigma_k,\Lk(a_\ell)}(p)},\]
and~\eqref{eqn:L} is then a consequence of~\eqref{eq:1}.
Similarly,
\[\mathbf{Var}\bigl(L(k,\ell)\bigr) \leqslant \sum_{\xi \in X} |\xi|^2 p^{|\xi|} \leqslant p^2 G''_{\Sigma_k,\Lk(a_\ell)}(p) + p G'_{\Sigma_k,\Lk(a_\ell)}(p)<\infty,
\]
the later strict inequality holding since $p\in(0,p_\Sigma)$ and $p_\Sigma$ is the radius of convergence of the series~$G_{\Sigma_k,\Lk(a_\ell)}$\,.

\medskip

Third, we prove that $\mathbb{E}[S(k,\ell)] = \mathcal{O}(k) \mu_{\Sigma_{k-1}(p)}/\mu_{\Sigma_k}(p)$ and that $\mathbf{Var}(S(k,\ell))$ is finite by induction on $k$.
Since the result is immediate when $k = 0$, we assume that $k \geqslant 1$.

We first prove that $\mathbf{Var}(S(k,\ell))$ is finite.
Indeed, the number of iterations of the \textbf{repeat} loop of lines~\Rref{line:while1b} to~\Rref{line:endwhile1b}, say $\mathbf{X}$, follows a geometric law.
Moreover, each sampling of $v_\infty$ requires a number of steps that has finite variance, and these samplings are independent and identically distributed.
Consequently, the number of steps spent in that \textbf{repeat} loop also has finite variance.
We prove similarly that the \textbf{for} loop of lines~\Rref{line:for1b} to~\Rref{line:endfor1b} requires a number of steps with finite variance, and therefore that $S(k,\ell)$ itself has finite variance.

Then, we prove that
\begin{equation}
\label{eqn:SKLrec}
\mathbb{E}[S(k,\ell)] \leqslant \frac{\mathcal{O}(1) + \max\{\mathbb{E}[S(k-1,k)],\mathbb{E}[S(k-1,\ell)]\}}{1-r},
\end{equation}
where $r$ is defined in line~\Rref{line:rmu}.
To do so, we focus on the event $\mathcal{E}$, which is realized when $a_k \notin \Lk(a_\ell)$, and Algorithm~2, on input $(k-1,\ell)$, outputs a trace $\xi \in \M_{\Sigma_{k-1} \setminus \Lk(a_k)}$;
thus, if $a_k \in \Lk(a_\ell)$, we have $\mathbb{P}[\mathcal{E}] = 0$.

By discussing whether $R = 0$ or $\mathcal{E}$ is satisfied during the first recursive call to Algorithm~2 in line~\Rref{line:endwhile1b}, we observe that:
\begin{align*}
\mathbb{E}[S(k,\ell)] = \mathcal{O}(1) + \mathbb{E}[S(k-1,\ell)]
& + r \, \mathbb{P}[\mathcal{E}] \, \mathbb{E}[S(k,\ell)] \\
& + r \, \mathbb{P}[\overline{\mathcal{E}}] \, \mathbb{E}[K+1] \, (\mathcal{O}(1)+\mathbb{E}[S(k-1,k)]).
\end{align*}
Indeed, Algorithm~2 first consists in recursively calling Algorithm~2 on input $(k-1,\ell)$, in line~\Rref{line:endwhile1b}, and performing a small amount of calculations which accounts for the initial $\mathcal{O}(1) + \mathbb{E}[S(k-1,\ell)]$ term;
then, if $R = 1$ and $\mathcal{E}$ is realized, we shall simply start Algorithm~2 again;
if, however, $R = 1$ and $\mathcal{E}$ is not realized, we simply perform $K+1$ recursive calls to Algorithm~2 on input $(k-1,k)$, with a slight overhead at each call.

Since $K$ follows a geometric law $\mathcal{G}(r)$, we know that $\mathbb{E}[K+1] = 1/(1-r)$.
Thus,~\eqref{eqn:SKLrec} follows from the following inequalities:
\begin{align*}
\mathbb{E}[S(k,\ell)]
& = \frac{\mathcal{O}(1) + \mathbb{E}[S(k-1,\ell)] + \dfrac{r - r \, \mathbb{P}[\mathcal{E}]}{1-r} (\mathcal{O}(1)+\mathbb{E}[S(k-1,k)])}{1 - r \, \mathbb{P}[\mathcal{E}]} \\
& = \frac{\mathcal{O}(1)}{1-r} + \frac{(1 - r) \, \mathbb{E}[S(k-1,\ell)] + (r - r \, \mathbb{P}[\mathcal{E}]) \, \mathbb{E}[S(k-1,k)]}{(1 - r \, \mathbb{P}[\mathcal{E}]) \, (1-r)} \\
& \leqslant \frac{\mathcal{O}(1)}{1-r} + \frac{\max\{\mathbb{E}[S(k-1,\ell)],\mathbb{E}[S(k-1,k)]\}}{1-r}.
\end{align*}

Finally, if we set $\mathcal{S}(k) = \max\{\mathbb{E}[S(k,\ell)] \colon \ell > k\}$, equations~\eqref{eq:4} and~\eqref{eqn:SKLrec} show that
\[\mathcal{S}(k) \leqslant \frac{\mu_{\Sigma_{k-1}}(p)}{\mu_{\Sigma_k}(p)} \bigl(\mathcal{O}(1) + \mathcal{S}(k-1)\bigr) \leqslant \frac{\mathcal{O}(1)}{\mu_{\Sigma_k(p)}} + \frac{\mu_{\Sigma_{k-1}}(p)}{\mu_{\Sigma_k}(p)} \mathcal{S}(k-1),\]
which suffices to prove~\eqref{eqn:S} by induction on $k$.
\end{proof}

\subsection{Random generation of infinite traces}
\label{sec:rand-gener-infin}

In Sections~\Rref{sec:rand-gener-finite-1} and~\Rref{sec:rand-gener-finite-2}, we have seen how to generate random finite traces in $\M_\Sigma$ according to any distribution of the form~$B_{\Sigma,p}$.
In order to generate a random infinite trace according to the uniform distribution~$\B_\Sigma$, introduced in~\S~\Rref{sec:infin-trac-unif}, it is natural to investigate whether the simple stacking of finite traces could eventually produce a uniform infinite trace.
Unfortunately, this is not possible in general, for any parameter~$p$: see a counter-example in~\cite[\S~6.1.2]{abbes17}.

However, we shall see that the stacking of \emph{pyramidal} traces does indeed yield infinite traces uniformly distributed.
Let us first recall the notion of irreducible trace monoid.

\begin{dfntn}
\label{def:1}
If $\R$ is a binary reflexive and symmetric relation on $\Sigma$ such that the graph $(\Sigma,\R)$ is connected, then the associated trace monoid $\M_\Sigma$ is \emph{irreducible}.
\end{dfntn}

We reformulate results proved in \cite{abbes17} in the following statement.

\begin{thrm}
\label{thr:4}
Let $\M_\Sigma$ be an irreducible trace monoid, and let $a_1\in\Sigma$.
The distribution $h$ on $\Pyr{\Sigma}$ defined by
\begin{gather}
\label{eq:14}
h(v) = {p_\Sigma}^{|v|}
\end{gather}
for all $v\in\Pyr{\Sigma}$ is a probability distribution.

Let $(V_n)_{n\geqslant1}$ be an \iid\ sequence of random variables distributed with the law~$h$.
Then the random generalized trace $\xi$ defined by
\begin{gather*}
\xi=\bigvee_{n\geqslant1}(V_1\cdots V_n)
\end{gather*}
is an infinite trace distributed according to the uniform distribution $\B_\Sigma$ on~$\BM$.
\end{thrm}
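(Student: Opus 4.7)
Three claims require verification: that $h$ is a probability distribution, that $\xi$ is almost surely an infinite trace, and that $\xi$ has the law $\B_\Sigma$.

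For the first, I would compute the total mass directly. By~\eqref{eq:11} and~\eqref{eq:1},
\[
\sum_{v\in\Pyr{a_1}} p_\Sigma^{|v|} \;=\; p_\Sigma\,G_{\Sigma\setminus\{a_1\},\Lk(a_1)}(p_\Sigma) \;=\; p_\Sigma\,\frac{\mu_{\Sigma\setminus\Lk(a_1)}(p_\Sigma)}{\mu_{\Sigma\setminus\{a_1\}}(p_\Sigma)},
\]
where the denominator is strictly positive: the irreducibility of $(\Sigma,\R)$ gives $p_\Sigma<p_{\Sigma\setminus\{a_1\}}$, so Lemma~\Rref{lem:1} together with continuity yields $\mu_{\Sigma\setminus\{a_1\}}(p_\Sigma)>0$. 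Since $\mu_\Sigma(p_\Sigma)=0$, the identity~\eqref{eq:6} evaluated at~$p_\Sigma$ gives $\mu_{\Sigma\setminus\{a_1\}}(p_\Sigma)=p_\Sigma\,\mu_{\Sigma\setminus\Lk(a_1)}(p_\Sigma)$, so that the sum above equals~$1$. For the second claim, each $V_n$ contains the letter~$a_1$, hence $|V_n|\geqslant 1$ and $|V_1\cdots V_n|\geqslant n\to\infty$; in particular $\xi\notin\M_\Sigma$, and the non-decreasing sequence $(V_1\cdots V_n)$ has a $\leqslant$-join in~$\BM_\Sigma$, which is precisely~$\xi$.

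For the third claim, it suffices by~\eqref{eq:9} to show $\pr(x\leqslant\xi)=p_\Sigma^{|x|}$ for every $x\in\M_\Sigma$. My strategy is to compare $\xi$ with traces $\xi^{(p)}\sim B_{\Sigma,p}$ and let $p\nearrow p_\Sigma$. By Theorem~\Rref{thr:3b} applied with $T=\Sigma$, the trace $\xi^{(p)}$ factors as $W_1^{(p)}\cdots W_{K_p}^{(p)}\cdot R^{(p)}$, where the $W_i^{(p)}$ are \iid\ $a_1$-pyramids of law $v\mapsto p^{|v|}/r(p)$, the integer $K_p$ is geometric of parameter $r(p)=p\,\mu_{\Sigma\setminus\Lk(a_1)}(p)/\mu_{\Sigma\setminus\{a_1\}}(p)$, and $R^{(p)}\in\M_{\Sigma\setminus\{a_1\}}$ is independent of the $W_i^{(p)}$. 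The computation of the previous paragraph yields $r(p)\to 1$ as $p\nearrow p_\Sigma$, so $K_p\to\infty$ in probability and the law of each $W_i^{(p)}$ converges pointwise to~$h$. Combined with the weak convergence $B_{\Sigma,p}\to\B_\Sigma$ on $\Mbar_\Sigma$ recalled in Section~\Rref{sec:infin-trac-unif}, this identifies the law of the iid stacking $\xi$ with~$\B_\Sigma$.

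The delicate point is upgrading this weak-convergence sketch to the pointwise identity on cylinders: one must show that, in a suitable coupling of $\xi^{(p)}$ with $\xi$, the remainder $R^{(p)}$ becomes asymptotically negligible in the sense of left divisibility, so that the events $\{x\leqslant W_1^{(p)}\cdots W_{K_p}^{(p)}\}$ converge to $\{x\leqslant\xi\}$ as $p\nearrow p_\Sigma$, allowing the exchange of the limits in $p$ and in~$n$. The irreducibility hypothesis is used repeatedly: it underpins the strict inequality $p_\Sigma<p_{\Sigma\setminus\{a_1\}}$ of the first paragraph, and rules out degenerate splittings in which $a_1$-pyramids would fail to exhaust~$\xi$. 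These technical arguments are precisely those carried out in~\cite{abbes17}, which the present theorem reformulates.
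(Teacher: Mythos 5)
The paper offers no proof of this theorem at all --- it is stated as a reformulation of results from \cite{abbes17} --- and your proposal ultimately rests on that same citation for the crucial identity that the law of $\xi$ is $\B_\Sigma$, so the two treatments essentially agree; your self-contained verifications of the easy parts are correct (the normalization $\sum_v p_\Sigma^{|v|}=1$ via $\mu_\Sigma(p_\Sigma)=0$ and \eqref{eq:6}, using $p_\Sigma<p_{\Sigma\setminus\{a_1\}}$ to evaluate the series at $p_\Sigma$, and the infinitude of $\xi$ since each $V_n$ contains $a_1$). Just note that your weak-convergence paragraph is, as you acknowledge, only a sketch: the cylinder identity $\pr(x\leqslant\xi)=p_\Sigma^{|x|}$ does not follow from $r(p)\to1$ and $B_{\Sigma,p}\to\B_\Sigma$ without the coupling/negligibility argument you flag, and that argument is precisely the content of \cite{abbes17}, which both you and the paper invoke.
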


This yields the following random generation algorithms.

\begin{thrm}
\label{thr:2}
Let $\M_\Sigma$ be an irreducible trace monoid, let $a_1,a_2,\ldots,a_n$ be the elements of\/~$\Sigma$, and let $a$ be some of the elements~$a_i$.
Then, Algorithm~1 is well-defined for all inputs~$(p_\Sigma,S,T)$ such that~$S \subseteq \Sigma\setminus\{a\}$, and Algorithm~2 is well-defined for all inputs~$(p_\Sigma,k,\ell)$ such that~$0 \leqslant k < \ell \leqslant n$.

Moreover, Algorithms~3 and~4 below are random endless procedures that output, at their~$k^{\text{th}}$ loop, an element~$\xi_k\in\M_\Sigma$ with the following properties:
\begin{enumerate}[nosep]
\item\label{item:5}~$(\xi_k)_{k\geqslant1}$ is a non-decreasing sequence.
\item\label{item:6} The element~$\xi=\bigvee_{k\geqslant1}\xi_k$, \ie the equivalence class of~$(\xi_k)_{k \geqslant 1}$ for the relation~$\asymp$, is an infinite trace distributed according to the uniform measure~$\B_\Sigma$.
\item\label{item:7} Under the same assumptions as in Theorem~\Rref{thr:1}, the first~$k$ loops of Algorithm~3 require the execution of~$\mathcal{O}(\size{\Sigma} \size{\xi_k})$ steps overall,
and the average and minimal sizes of\/~$\xi_k$ are linear in~$k$.
Hence, the algorithm produces on average a constant number of additional elements of\/~$\Sigma$ by unit of time, at a rate $\Omega(1/|\Sigma|)$.
\item Under these same assumptions, Algorithm~4 produces on average a constant number of additional elements of\/~$\Sigma$ by unit of time, at a rate $\Omega(\tau)$, where
\begin{equation}
\label{def:tau}
\tau = \left|\frac{\mu_{\Sigma_{n-1}}(p_\Sigma) + p_\Sigma \, \mu'_\Sigma(p_\Sigma)}{n}\right|.
\end{equation}
More precisely, when $t$ grows arbitrarily, the number of elements of $\Sigma$ produced after $t$ units of time is almost surely bounded from below by an expression of the form $\Omega(\tau) \, t$.
\end{enumerate}
\end{thrm}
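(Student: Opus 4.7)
The plan is to treat the four items of the theorem in order.

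For well-definedness (the first assertion), I appeal to the standard fact that for an irreducible trace monoid $\M_\Sigma$ with $|\Sigma|\geqslant 2$, the strict inequality $p_\Sigma<p_{\Sigma'}$ holds for every proper subset $\Sigma'\subsetneq\Sigma$, so that $p_\Sigma\in(0,p_S)$ whenever $S\subseteq\Sigma\setminus\{a\}$, and $p_\Sigma\in(0,p_{\Sigma_k})$ whenever $k<n$. The degenerate case $|\Sigma|=1$ is handled directly. Both algorithms thus apply on the prescribed inputs.

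For items~\Rref{item:5} and~\Rref{item:6}, I would describe Algorithms~3 and~4 as computing, at the $k^{\text{th}}$ loop, a fresh pyramidal trace $V_k$ (via Algorithm~1 on input $(p_\Sigma,\Sigma\setminus\{a\},\Lk(a))$ for Algorithm~3, respectively Algorithm~2 on input $(p_\Sigma,n-1,n)$ for Algorithm~4) and then setting $\xi_k=\xi_{k-1}\cdot V_k$. Monotonicity is then immediate. By Theorem~\Rref{thr:1} (resp.\ Theorem~\Rref{thr:1b}), the auxiliary trace $z_k$ output by the subroutine follows the law $D_{\Sigma\setminus\{a\},\Lk(a)}$, so $V_k=z_k\cdot a$ has probability proportional to $p_\Sigma^{|z_k|+1}=p_\Sigma^{|V_k|}$; the $V_k$ are therefore \iid\ with law $h$, and Theorem~\Rref{thr:4} delivers $\xi=\bigvee_k \xi_k\sim\B_\Sigma$.

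For item~\Rref{item:7}, Theorem~\Rref{thr:1} bounds the $k^{\text{th}}$ loop by $\mathcal{O}(|\Sigma|(|V_k|+1))$ steps, so the cumulative cost after $k$ loops is $\mathcal{O}(|\Sigma|\,|\xi_k|)$ since $|\xi_k|\geqslant k$. I would then compute $\mathbb{E}[|V|]$ as a logarithmic derivative of $p\,G_{\Sigma\setminus\{a\},\Lk(a)}(p)$ at $p_\Sigma$ and, using~\eqref{eq:6} together with its derivative, obtain $\mathbb{E}[|V|]=-p_\Sigma\mu'_\Sigma(p_\Sigma)/\mu_{\Sigma\setminus\{a\}}(p_\Sigma)$, which is finite and strictly positive (the denominator is positive since $p_\Sigma<p_{\Sigma\setminus\{a\}}$, and $\mu'_\Sigma(p_\Sigma)<0$ since $p_\Sigma$ is the smallest positive root of $\mu_\Sigma$). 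The strong law of large numbers then yields $|\xi_k|/k\to\mathbb{E}[|V|]$ almost surely, giving linear growth of $|\xi_k|$ and a production rate of $\Omega(1/|\Sigma|)$.

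For the last item, the analysis parallels the previous one but uses Theorem~\Rref{thr:1b}, which supplies $\mathbb{E}[S(n-1,n)]=\mathcal{O}(n/\mu_{\Sigma_{n-1}}(p_\Sigma))$ as well as $\mathbb{E}[L(n-1,n)]=\mathbb{E}[|V|]-1$ \emph{via}~\eqref{eqn:L}. A renewal argument --- valid because $S(n-1,n)$ has finite variance by Theorem~\Rref{thr:1b} --- then shows that the number of $\Sigma$-elements produced after $t$ units of time is almost surely of order $t\cdot\mathbb{E}[L(n-1,n)]/\mathbb{E}[S(n-1,n)]$. The algebraic heart of the argument is the identity $\mu_{\Sigma_{n-1}}(p_\Sigma)\,\mathbb{E}[L(n-1,n)]=-(\mu_{\Sigma_{n-1}}(p_\Sigma)+p_\Sigma\mu'_\Sigma(p_\Sigma))$, which I would derive by combining~\eqref{eqn:L} with~\eqref{eq:6} and its derivative at $p_\Sigma$; substituting this into the ratio yields precisely the lower bound $\Omega(\tau)$. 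The main obstacle will be tracking signs carefully in the Möbius polynomial manipulations and ensuring that the renewal argument absorbs the possibly large constants hidden in Theorem~\Rref{thr:1b}.
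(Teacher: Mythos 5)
Your proposal is correct and follows essentially the same route as the paper: well-definedness from $p_\Sigma<p_{\Sigma\setminus\{a\}}$ (irreducibility), correctness by recognizing the outputs as \iid $a$-pyramids of law $h$ and invoking Theorem~\Rref{thr:4}, the $\Omega(1/|\Sigma|)$ rate from the per-loop bound of Theorem~\Rref{thr:1}, and the $\Omega(\tau)$ rate from $\rho=\mathbb{E}[L(n-1,n)]/\mathbb{E}[S(n-1,n)]$ combined with~\eqref{eqn:L},~\eqref{eqn:S} and~\eqref{eq:6} evaluated at $p_\Sigma$ where $\mu_\Sigma(p_\Sigma)=0$. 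Your extra touches (explicit $\mathbb{E}[|V|]$, the law of large numbers/renewal phrasing) only flesh out what the paper states more briefly; the only cosmetic caveat is that strict negativity of $\mu'_\Sigma(p_\Sigma)$ needs simplicity of the root, which is not actually required since $\mathbb{E}[|V|]\geqslant 1$ in any case.
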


\algtext*{Until}

\begin{algorithm}
\label{algo:2}
\caption{Outputs approximations of~$\xi\in\BM_\Sigma$ distributed according to~$\B_\Sigma$}
\normalfont
\begin{algorithmic}[1]
\Require{---}\Comment{No input}
\State
$\xi\gets\unit$\Comment{Initialization with the unit element of the monoid}
\Repeat{ \textbf{forever:}}
\State\label{repeat-start}
$v\gets\text{output of Algorithm~1 on input }(p_\Sigma,\Sigma\setminus\{a\},\Lk(a))$\label{repeat-call}
\State
$\xi\gets\xi\cdot v \cdot a$
\State\textbf{output}~$\xi$\Comment{Writes on a register}\label{repeat-end}
\Until{}
\end{algorithmic}
\end{algorithm}

\begin{algorithm}
\label{algo:2b}
\caption{Outputs approximations of~$\xi\in\BM_\Sigma$ distributed according to~$\B_\Sigma$}
\normalfont
\begin{algorithmic}[1]
\Require{---}\Comment{No input}
\State
$\xi\gets\unit$\Comment{Initialization with the unit element of the monoid}
\Repeat{ \textbf{forever:}}
\State\label{repeat-start:2b}
$v\gets\text{output of Algorithm~2 on input }(p_\Sigma,n-1,n)$\label{repeat-call:2b}
\State
$\xi\gets\xi\cdot v \cdot a_n$
\State\textbf{output}~$\xi$\Comment{Writes on a register}\label{repeat-end:2b}
\Until{}
\end{algorithmic}
\end{algorithm}

\begin{proof}
The inequalities $p_\Sigma<p_{\Sigma\setminus\{a\}}$ and $p_\Sigma < p_{\Sigma_{n-1}}$ rely on the irreducibility assumption on~$\M_\Sigma$, and follow from~\cite[Th.~4.5]{abbes17};
they prove that Algorithms~1 and~2 are well-defined when provided with the parameter $p = p_\Sigma$ as a part of their inputs.

We first focus on the correctness and efficiency of Algorithm~3.
Let $V_k$ be the (random) output of the $k$\textsuperscript{th} call to Algorithm~1 in line~\Rref{repeat-call}.
By construction, the $a$-pyramidal trace $V_k \cdot a$ is distributed according to a law proportional to~$p_\Sigma^{|V_k \cdot a|}$.
Hence, that law coincides with the law $h$ given in~\eqref{eq:14}.

Let $(\xi_k)_{k\geqslant1}$ be the sequence of outputs of Algorithm~3, and let $\xi=\bigvee_{k\geqslant1}\xi_k$.
Then $\xi_k=(V_1\cdot a)\cdot\ldots\cdot(V_k\cdot a)$.
Since the random variables $(V_k\cdot a)_{k\geqslant1}$ are \iid and follow the law $h$, Theorem~\Rref{thr:4} shows that $\xi$ is distributed according to~$\B_\Sigma$.

Moreover, Theorem~\Rref{thr:1} proves that every \textbf{repeat} block, when producing an element~$V$, requires executing $\mathcal{O}(\size{\Sigma} (\size{V}+1))$ steps.
Hence, producing the element $\xi_k$, whose length is positive, requires executing $\mathcal{O}(\size{\Sigma} \size{\xi_k})$ steps overall.

\medskip

Provided that $a = a_n$, Algorithms~1 and~2 output random traces with the same distribution, and thus the proof of correctness of Algorithm~3 applies verbatim to Algorithm~4.

Finally, since the calls to Algorithm~2 (in line~\Rref{repeat-call:2b} of Algorithm~4) are independent, the law of large numbers ensures that, when $k$ tends to $+\infty$, approximately $k \, \mathbb{E}[S(n-1,n)]$ steps will have been required, thereby producing a trace of size approximately $k \, \mathbb{E}[L(n-1,n)]$.
That is why, on average, Algorithm~4 produces a constant number of additional elements of $\Sigma$ per unit of time, at a rate $\Omega(\rho)$, where
\[\rho = \frac{\mathbb{E}[L(n-1,n)]}{\mathbb{E}[S(n-1,n)]}.\]

Furthermore, we have $p = p_\Sigma$, so that $\mu_\Sigma(p) = 0$.
Consequently, identities~\eqref{eq:6} and~\eqref{eqn:L} prove that
\begin{gather}
\label{eqn:Lsimple}
\mathbb{E}[L(n-1,n)] = \frac{-\mu_{\Sigma_{n-1}}(p_\Sigma) - p_\Sigma \mu'_\Sigma(p_\Sigma)}{\mu_{\Sigma_{n-1}}(p_\Sigma)}.
\end{gather}
Thus~\eqref{eqn:S} allows us to conclude that
\[\rho = \Omega\left(\frac{\mu_{\Sigma_{n-1}}(p_\Sigma)}{n} \, \frac{-\mu_{\Sigma_{n-1}}(p_\Sigma) - p_\Sigma \mu'_\Sigma(p_\Sigma)}{\mu_{\Sigma_{n-1}}(p_\Sigma)}\right) = \Omega(\tau). \qedhere\]
\end{proof}

One might be concerned by the fact that the sequence~$(\xi_k)_{k\geqslant1}$ that is output by Algorithms~3 and~4 has
a particular ``shape'', since it is the concatenation of~$a$-pyramidal traces.
For instance, if one wishes to use it for
parametric estimation or to sample some statistics on traces, the result could \emph{a priori} depend on the choice of~$a$.
But asymptotically, for a large class of statistics, the result will not depend on the choice of~$a$;
a precise justification of this fact can be found in~\cite{abbes16}.
For instance, the density of appearance of an arbitrary letter in an infinite trace can be approximated in this way.

\section{Which approach should be favored in practice for the random generation of large traces?}
\label{sec:when-should-pyram}

In Section~\Rref{sec:rand-gener-finite}, we proposed two algorithms, Algorithms~3 and~4, for generating increasing random sequences of finite traces whose limits are distributed according to the uniform measure $\B_\Sigma$ on infinite traces.
Both algorithms are based on decomposing traces into pyramids and, for a given trace monoid $\M_\Sigma$, the prefixes they output grow at constant speed.
However, each of them has its own strengths and weaknesses.

\subsection{Intrinsic problem difficulty}
\label{sec:intr-probl-diff}

First, what may look like a weakness common to both algorithms is the need to precompute quantities $\mu_S(p)$, where $\mu_S(X)$ is a Möbius polynomial and $S \subseteq \Sigma$.
Indeed, even computing the leading coefficient of such polynomials is \#P-hard~\cite{valiant79}, and thus one might be afraid of having to perform such computations. Furthermore, computing the values of $\mu_S(z)$ has been recently the topic of intense investigations~\cite{harvey18,galanis17}, showing that this computation can be extremely hard, depending on the size and the degree of the graph and the region of the parameter~$z$.

Our procedures for producing random finite traces rely on the precomputations of parameters of the form $\mu_S(p)$ for $p<p_\Sigma$. We claim that these computations are actually unavoidabe. To justify this claim, we appeal to a standard result from statistical estimation theory to prove the following:
\begin{compactenum}
\item[(\dag)]\label{item:3}\itshape
Assume given a procedure that produces a random finite traces distributed according to the law $B_{S,p}$, within a time proportional to the length of the produced trace. Let a precision $\ve>0$ and a confidence $\alpha\in(0,1)$ be given and let $\lambda>0$ be such that $\pr(|U|<\lambda)=\alpha$ where $U$ is a standard normal law. Then an asymptotic $\alpha$-confidence interval for $\mu_\Sigma(p)$  of length $\ve\min\{p,1-p\}$ is obtained in time $O\bigl(\frac{n^3\lambda^2}{\ve^2\min\{p,1-p\}}\bigr)$ for any $p<p_\Sigma$, and where $n=|\Sigma|$.
\end{compactenum}

\begin{proof}[Proof of\/~\normalfont(\dag)]
First, observe, using identity~\eqref{eq:6}, that
\begin{gather}
\label{eqn:mudec}
\mu_{U \setminus \{u\}}(p) \geqslant \mu_{U \setminus \{u\}}(p) - p \mu_{U \setminus \Lk(u)}(p) = \mu_U(p)
\end{gather}
whenever $U \subseteq \Sigma$ and $u \in U$, which means that the function $U \mapsto \mu_U(p)$ is non-increasing.
Since $U \setminus \Lk(u) \subseteq U \setminus \{u\}$, it even follows that
\begin{gather}
\label{eqn:muinc}
\mu_U(p) = \mu_{U \setminus \{u\}}(p) - p \mu_{U \setminus \Lk(u)}(p) \leqslant (1-p) \mu_{U \setminus \{u\}}(p)
\end{gather}
or, more generally, that $\mu_U(p) \leqslant (1-p) \mu_V(p)$ whenever $V \subseteq U \subseteq \Sigma$.

Now, let $a$ be an element of $\Sigma \setminus S$ that does not commute with all elements of~$S$, and let $T = S \setminus \Lk(a)$.
Identity~\eqref{eq:6} proves that $\mu_S(p) - p \mu_T(p) = \mu_{S \cup \{a\}}(p) \geqslant 0$, which yields the double inequality
\begin{gather}
\label{eqn:ratios}
p \leqslant \frac{\mu_S(p)}{\mu_T(p)} \leqslant 1-p.
\end{gather}
Observing that $\mu_S(p) / \mu_T(p) = B_{S,p}(\M_T)$ finally gives us a way to evaluate this ratio: just choose samples from $D_{S,S}$ and check whether they belong to~$\M_T$.
We can then, by induction, evaluate the real number $\mu_T(p)$ itself, from which we deduce our estimation of~$\mu_S(p)$. 

There are $n$ ratios to evaluate, each one being statistically estimated as the parameter of a Bernoulli law. Standard results from statistical estimation theory show that each ratio will be evaluated within a confidence interval of length~$\frac1n\ve\min\{p,1-p\}$, thanks to~\eqref{eqn:ratios}, after a number of trials  $O(\frac{n^2\lambda^2}{\ve^2\min\{p,1-p\}})$. The length of each trial is bounded in average by the expectation of~$|X|$, where $X$ is a trace that follows the law~$B_{S,p}$; whence the announced result.
\end{proof}

\begin{remark}
  A naïve approach would consist in testing on a sample $X_1,\ldots X_t$ of traces distributed according to $B_{S,p}$ whether $X_i=\unit$, since $B_{S,p}(\unit)=\mu_S(p)$; but this would lead to an estimation time within the order $O(\frac{\lambda^2}{\ve^2\mu_S(p)})$, and of course $\mu_S(p)$ can be drastically smaller than~$p$.
\end{remark}

\subsection{Pros and cons of Algorithms~1 to~4}
\label{sec:pros-cons-algorithms}

Second, why should we prefer Algorithms~1 and~3 over the state-of-the art approach used in~\cite{abbes15}?
This approach requires, for each pair $(c,c')$ of compatible cliques (\ie cliques such that $c \to c'$), to precompute and store the transition probability $p^{|c'|} \mu_{I(c)}(p) / \mu_{I(c')}(p)$, where $I(c)$ denotes the set $\Sigma \setminus \bigcup_{a_i \in c} \Lk(a_i)$ of monoid generators that commute with $c$ but do not divide $c$.
Thus, in case the trace monoid $\M_\Sigma$ admits $|\Cstar|$ cliques, we should store approximately $|\Cstar|^2$ real numbers.
This is often prohibitive, since $|\Cstar|$ can go up to $2^{|\Sigma|}$, and should be expected to be exponential in $|\Sigma|$.
Algorithms~1 and~3 aim at reducing this space complexity, by storing only $|\Cstar|$ real numbers: even if this quantity can still be exponential in $|\Sigma|$, this is a quadratic improvement over the previous solution.

Indeed, Algorithm~3 produces prefixes $\xi_k$ at a very good, constant speed.
Unfortunately, Algorithm~1, on which it relies, may require storing exponentially many real numbers $\mu_X(p)$, where $X$ is a subset of $\Sigma$.
This is because, when executing Algorithm~1 on a given input~$(p,S,T)$, it will choose some element $a_1$ of $S \cap T$ before recursively calling itself on inputs $(p,S \setminus \{a_1\}, \Lk(a_1))$ and $(p,S \setminus \{a_1\}, T)$.
These recursive calls will require choosing elements $a_2 \in S \cap \Lk(a_1) \setminus \{a_1\}$ and $a_2' \in S \cap T \setminus \{a_1\}$, and then accessing the real numbers $\mu_{S \setminus \{a_1,a_2\}}(p)$ and $\mu_{S \setminus \{a_1,a_2'\}}(p)$.
Thus, if $S \cap T \cap \Lk(a_1) = \emptyset$, we cannot choose $a_2 = a'_2$, and we must compute real numbers $\mu_X(p)$ for two distinct sets $X$; this unfortunate situation can happen again and again at each step of the computation, at least for a substantial number of recursive calls.

On the contrary, Algorithm~4, which is based on a \emph{rejection sampling} approach~\cite{robert99}, allows computing only a linear quantity of real numbers $\mu_X(p)$.
This circumvents the main disadvantage of Algorithm~3, by dramatically reducing its space complexity.
However, this decrease comes at the expense of a possibly much smaller production rate, which could be as small as the real number $\tau$ defined in~\eqref{def:tau}.
For instance, if $\M_\Sigma$ is the \emph{dimer} monoid, \ie if $\Sigma = \{a_1,a_2,\ldots,a_n\}$ and $\mathcal{R} = \{(a_i,a_j) \colon i-1 \leqslant j \leqslant i+1\}$, it can be proved that
\[\tau \sim \frac{n^2}{2^{n+2} \pi^2}\]
when $n \to +\infty$.
This suggests that our lower bound on the production rate of Algorithm~4 might be exponentially small:
although this might seem disappointing, such a trade-off where an exponential improvement in memory consumption is compensated by an exponential deterioration in time consumption is quite standard.

Yet, in practice, our actual production rates are not so small, and experiments suggest that they are even \emph{constant} on dimer monoid, in spite of this exponentially small lower bound.
On some other trace monoids, such as the $(2n+1)$-element \emph{star} monoid defined by $\Sigma = \{a_1,a_2,\ldots,a_{2n+1}\}$ and $\mathcal{R} = \{(a_i,a_j) \colon i = j \pm n \text{ or } (\min\{i,j\} = 1 \text{ and } i,j \leqslant n+1)\}$, it can be proved that
\[\tau \sim \frac{\ln(n)}{n^3}\]
when $n \to +\infty$, this lower bound being matched (up to a sub-linear factor) by the actual production rate of Algorithm~4.
In fact, the upper bound on $\mathbb{E}[S(k,\ell)]$ provided by~\eqref{eqn:S} is tight for some monoids, but far from tight for some other monoids;
we do not any family of irreducible trace monoids where the production rate of Algorithm~4 would be exponentially (or even super-polynomially small), which suggests that Algorithm~4 might be a good option anyway.

\subsection{Classes of graphs where our algorithms are efficient}

In conclusion, there is yet to discover an algorithmic procedure that would allow producing prefixes $\xi_k$ at a provably good (say, polynomially small in $|\Sigma|$) constant speed, while requiring to store only polynomially many real numbers $\mu_X(p)$.
However, there are large classes of trace monoids for which Algorithm~2 rarely proceeds to rejecting traces provided in recursive calls.
Such classes are characterized by the graph $\mathcal{G} = (\Sigma,\mathcal{R})$ associated with the monoid $\M_\Sigma$.

One such class on which Algorithms~1 and Algorithms~3 actually require no precomputing is the class of graphs with bounded \emph{tree-width}, a widely-studied class of ``tree-like'' graphs~\cite{cygan15}.
Indeed, when $\mathcal{G}$ has tree-width $k$, each Möbius polynomial $\mu_X$ can be computed in time exponential in $k$ but only polynomial in~$|\Sigma|$.
Consequently, instead of precomputing and storing exponentially many real numbers $\mu_X(p_\Sigma)$, we may simply recompute on-the-fly these numbers whenever needed.

Another class of interest for Algorithms~2 and~4 is the class of \emph{chordal graphs}~\cite{fulkerson1965}.
A graph $\mathcal{G}$ is \emph{chordal} if it contains no induced cycles of length $4$ or more: in other words, every cycle of length $4$ or more contains non-consecutive vertices that are neighbors of each other.
Indeed, the production rate of Algorithm~4 is very good when $\mathcal{G}$ is chordal, as outlined by the following result.

\begin{prpstn}
\label{prop:IG}
Let~$\M_\Sigma$ be a trace monoid and let $\mathcal{G} = (\Sigma,\mathcal{R})$ be the associated graph.
The graph $\mathcal{G}$ is chordal if and only if there exists an enumeration $a_1,a_2,\ldots,a_n$ of the elements of\/ $\Sigma$ for which Algorithms~2 and~4 will never require rejection.
\end{prpstn}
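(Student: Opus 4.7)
The plan is to translate the ``no rejection'' statement into a purely combinatorial condition on the enumeration $a_1,\ldots,a_n$, and then recognize this condition as the one characterizing perfect elimination orderings (PEOs) of chordal graphs.

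First I would analyze when rejection can occur during a single call to Algorithm~2 on input $(p_\Sigma,k,\ell)$. Rejection requires $R=1$ in line~\Rref{lineR}, which has positive probability iff $r>0$, iff $\Sigma'_k\cap\Lk(a_\ell)\neq\emptyset$. When $R=1$, the rejection test is triggered iff $a_k\notin\Lk(a_\ell)$ and $v_\infty\in\M_{\Sigma_{k-1}\setminus\Lk(a_k)}$. Since $v_\infty$ is distributed according to $D_{\Sigma_{k-1},\Lk(a_\ell)}$ and this law gives positive mass to the empty trace $\unit$, which always belongs to $\M_{\Sigma_{k-1}\setminus\Lk(a_k)}$, rejection happens with positive probability precisely when $\Sigma'_k\cap\Lk(a_\ell)\neq\emptyset$ \emph{and} $a_k\notin\Lk(a_\ell)$. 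Hence a call $(k,\ell)$ never triggers rejection iff the condition $(\star)$ holds: $\Sigma'_k\cap\Lk(a_\ell)=\emptyset$ or $a_k\in\Lk(a_\ell)$. Since Algorithm~4 initiates recursion from $(p_\Sigma,n-1,n)$, and, under the irreducibility assumption implicit in the setting of Theorem~\Rref{thr:2}, every pair $(k,\ell)$ with $1\leqslant k<\ell\leqslant n$ is reached with positive probability---a straightforward induction on $n-k$, using that in a connected $\mathcal{G}$ every component $\Sigma'_\ell$ has a vertex adjacent to some $a_{\ell'}$ with $\ell'>\ell$---``no rejection'' for Algorithms~2 and~4 is equivalent to $(\star)$ at every such pair.

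Next I would show that $(\star)$ is equivalent to the ordering $a_1,a_2,\ldots,a_n$ being a PEO of $\mathcal{G}=(\Sigma,\R)$, namely: for every $j$, the set $\Lk(a_j)\cap\{a_{j+1},\ldots,a_n\}$ forms a clique in $\mathcal{G}$. The forward implication is direct: given $j<k<\ell$ with $a_j\sim a_k$ and $a_j\sim a_\ell$, applying $(\star)$ at $(k,\ell)$ with the witness $a_j\in\Sigma'_k\cap\Lk(a_\ell)$ forces $a_k\in\Lk(a_\ell)$, \ie $a_k\sim a_\ell$. For the converse, assume PEO and, given $k<\ell$ with $a_k\notin\Lk(a_\ell)$, consider a shortest path $a_k=a_{i_0}-a_{i_1}-\cdots-a_{i_m}=a_j$ in $(\Sigma_k,\R)$ reaching a vertex of $\Lk(a_\ell)$; it suffices to derive a contradiction from $m\geqslant 1$. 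Since $i_s<k$ for $s\geqslant 1$, the minimum index $i^*$ on the path is attained at some $i_s$ with $s\geqslant 1$. If $i^*=i_m$, then $a_{i_{m-1}}$ and $a_\ell$ are both neighbors of $a_j$ with strictly larger index; by PEO they are adjacent, so $a_{i_{m-1}}\in\Lk(a_\ell)$, contradicting the minimality of the path (which, for $m=1$, reduces to $a_k\notin\Lk(a_\ell)$). If $i^*=i_s$ for some $0<s<m$, then PEO applied at $a_{i^*}$ forces $a_{i_{s-1}}\sim a_{i_{s+1}}$, producing a strict shortcut of the path, again a contradiction.

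The proposition then follows from the classical theorem of Fulkerson and Gross~\cite{fulkerson1965}: $\mathcal{G}$ is chordal iff it admits a PEO. Combined with the above equivalence, this completes the argument. The main obstacle is the converse implication $(\star)\Leftarrow\text{PEO}$, which relies on the shortest-path inductive shortcut sketched above; the rest---identifying exactly when a single call to Algorithm~2 can trigger the rejection test and verifying the reachability of every pair $(k,\ell)$ in the recursion---is essentially bookkeeping.
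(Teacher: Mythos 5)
Your core equivalence is correct and runs close to the paper's own argument. The translation of ``no rejection at a call $(k,\ell)$'' into the condition $(\star)$ ($a_k\in\Lk(a_\ell)$ or $\Sigma'_k\cap\Lk(a_\ell)=\emptyset$) is exactly the characterization the paper works with, and your converse direction (perfect elimination ordering implies $(\star)$, via a shortest path to $\Lk(a_\ell)$ inside $\Sigma_k$ and a chord/shortcut contradiction at the minimal-index vertex) is essentially the paper's proof, including the $m=1$ degenerate case. Your forward direction differs slightly: you show that $(\star)$ forces the enumeration itself to be a perfect elimination ordering (a two-line argument) and then invoke the full Fulkerson--Gross equivalence, whereas the paper argues directly that $(\star)$ kills every chordless cycle of length at least $4$ and only uses the ``chordal implies PEO exists'' half of \cite{fulkerson1965}. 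Both routes are valid; yours is a bit cleaner but leans on both directions of the classical theorem.

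The one genuine flaw is your reachability claim: it is not true that ``every pair $(k,\ell)$ with $1\leqslant k<\ell\leqslant n$ is reached with positive probability'' by the recursion started at $(n-1,n)$. For instance, if $\mathcal{G}$ is the path $a_1-a_2-a_3-a_4$, the call $(2,4)$ has $\Sigma_2\cap\Lk(a_4)=\emptyset$, so it returns $\unit$ immediately and the input $(1,4)$ never arises. What saves you is that the unreached pairs are precisely those with $\Sigma_{k+1}\cap\Lk(a_\ell)=\emptyset$, and at such pairs $(\star)$ holds vacuously since $\Sigma'_k\subseteq\Sigma_{k+1}$. So the statement you actually need is the weaker one: every pair at which $(\star)$ \emph{fails} is reached with positive probability. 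This is true and provable along the lines you sketch, but the induction has to be organized on the second argument: show by downward induction on $\ell$ that the value $\ell$ occurs as a second argument (connectivity of $\mathcal{G}$ gives some $\ell''>\ell$ with $\Sigma'_\ell\cap\Lk(a_{\ell''})\neq\emptyset$, the induction hypothesis makes $(\ell''-1,\ell'')$ reachable, and the deterministic chain of calls $(j,\ell'')\to(j-1,\ell'')$ descends to $(\ell,\ell'')$ because $\Sigma_j\cap\Lk(a_{\ell''})\supseteq\Sigma'_\ell\cap\Lk(a_{\ell''})\neq\emptyset$); then any $(\star)$-violating pair $(k,\ell)$ is reached from $(\ell-1,\ell)$ by the same descending chain. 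Alternatively, you can sidestep reachability entirely by reading the proposition as the paper implicitly does: Algorithm~2 is a stand-alone procedure that may be invoked on any admissible input $(p,k,\ell)$, so ``never requires rejection'' quantifies over all pairs directly. With either repair your argument is complete.
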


\begin{proof}
Monoids $\M_\Sigma$ on which Algorithms~2 and~4 never require rejection are those for which there exists an enumeration $a_1,\ldots,a_n$ of the elements of $\Sigma$ such that, whenever $1 \leqslant k < \ell \leqslant n$, either $a_k \in \Lk(a_\ell)$ or the set $\Sigma'_k$ computed in line~\Rref{line:sigmak'} is disjoint from $\Lk(a_\ell)$.

First, assume that $\M_\Sigma$ is such a monoid, and let $\mathcal{C}$ be a proper cycle of $\mathcal{G}$ with length $m \geqslant 3$.
Let $a_{i_1} < a_{i_2} < \ldots < a_{i_m}$ be the vertices of $\mathcal{C}$: vertices from $\mathcal{C}$ are neighbors of each other if and only if they are consecutive.
Since $a_{i_m}$ has neighbors in the sets $\Sigma'_{i_{m-1}}$ and $\Sigma'_{i_{m-2}}$, both $a_{i_{m-1}}$ and $a_{i_{m-2}}$ must be its neighbors;
similarly, $a_{i_{m-1}}$ has a neighbor in the set $\Sigma'_{i_{m-2}}$, and thus that neighbor is $a_{i_{m-2}}$.
Hence, $m = 3$, which means that $\mathcal{G}$ is chordal.

Conversely, if $\mathcal{G}$ is chordal, it is known that it has a \emph{perfect elimination ordering}~\cite{fulkerson1965}, \ie that there exists an enumeration $a_1,a_2,\ldots,a_n$ of the elements of $\Sigma$ such that, for all integers $k \leqslant n$, any two vertices $a_i, a_j \in \Lk(a_k)$ such that $i > j > k$ are neighbors of each other.
In particular, consider two integers $k < \ell$ such that $\Sigma'_k \cap \Lk(a_\ell) \neq \emptyset$.
Let $a_{i_1},a_{i_2},\ldots,a_{i_m}$ be a path of minimal length that goes from $a_{i_1} = a_k$ to $a_{i_m} = a_\ell$ while staying in $\Sigma_k \cup \{a_\ell\}$.
For each integer $j \in \{2,3,\ldots,m-1\}$, both vertices $a_{i_{j-1}}$ and $a_{i_{j+1}}$ belong to $\Lk(a_{i_j})$ while not being neighbors of each other, and thus $i_j > \min\{i_{j-1},i_{j+1}\}$.
Hence, an induction shows that $i_m > i_{m-1} > \ldots > i_1 = k$, and since $k \geqslant i_{m-1}$, it follows that $m = 2$, \ie that $a_k \in \Lk(a_\ell)$.
\end{proof}

In particular, when $\M_\Sigma$ is the dimer monoid with $n$ generators, Theorem~\Rref{thr:2} only proves a quite bad lower bound on the rate at which Algorithm~4 produces elements of $\Sigma$.
Proposition~\Rref{prop:IG} shows that, although that lower bound might be accurate if the elements of $\Sigma$ were enumerated in an inadequate order, enumerating these elements appropriately may result in much better efficiency guarantees. 

\bibliographystyle{plain}
\bibliography{biblio} 

\begin{thebibliography}{10}

\bibitem{abbes16}
S.~Abbes.
\newblock A cut-invariant law of large numbers for random heaps.
\newblock {\em Journal of Theoretical Probability (Springer)}, pages 1--34,
  2016.

\bibitem{abbes17}
S.~Abbes.
\newblock Synchronization of {B}ernoulli sequences on shared letters.
\newblock {\em Information and Computation}, 255(1):1--26, 2017.

\bibitem{abbes15}
S.~Abbes and J.~Mairesse.
\newblock Uniform and {B}ernoulli measures on the boundary of trace monoids.
\newblock {\em Journal of Combinatorial Theory, Series A}, 135:201--236, 2015.

\bibitem{abbes15conf}
S.~Abbes and J.~Mairesse.
\newblock Uniform generation in trace monoids.
\newblock In G.~Italiano, G.~Pighizzini, and D.~Sannella, editors, {\em
  Mathematical Foundations of Computer Science 2015 (MFCS 2015), part 1},
  volume 9234 of {\em LNCS}, pages 63--75. Springer, 2015.

\bibitem{cartier69}
P.~Cartier and D.~Foata.
\newblock {\em Probl\`emes combinatoires de commutation et r\'earrangements},
  volume~85 of {\em Lecture Notes in Mathematics}.
\newblock Springer, 1969.

\bibitem{csikvari13}
P.~Csikv\'ari.
\newblock Note on the smallest root of the independence polynomial.
\newblock {\em Combinatorics, Probability and Computing}, 22(1):1--8, 2013.

\bibitem{cygan15}
M.~Cygan, F.~Fomin, {\L}.~Kowalik, D.~Lokshtanov, D.~Marx, M.~Pilipczuk,
  Micha{\l} Pilipczuk, and Saket Saurabh.
\newblock {\em Parameterized algorithms}.
\newblock Springer, 2015.

\bibitem{diekert90}
V.~Diekert.
\newblock {\em Combinatorics on traces}, volume 454 of {\em LNCS}.
\newblock Springer, 1990.

\bibitem{diekert95}
V.~Diekert and G.~Rozenberg, editors.
\newblock {\em The Book of Traces}.
\newblock World Scientific, 1995.

\bibitem{fulkerson1965}
D.~Fulkerson and O.~Gross.
\newblock Incidence matrices and interval graphs.
\newblock {\em Pacific journal of mathematics}, 15(3):835--855, 1965.

\bibitem{galanis17}
A.~Galanis, L.A. Goldberg, and D.~Stefankovic.
\newblock {Inapproximability of the Independent Set Polynomial Below the
  Shearer Threshold}.
\newblock In I.~Chatzigiannakis, P.~Indyk, F.~Kuhn, and A.~Muscholl, editors,
  {\em 44th International Colloquium on Automata, Languages, and Programming
  (ICALP 2017)}, volume~80 of {\em Leibniz International Proceedings in
  Informatics (LIPIcs)}, pages 28:1--28:13, 2017.

\bibitem{goldwurm00}
M.~Goldwurm and M.~Santini.
\newblock Clique polynomials have a unique root of smallest modulus.
\newblock {\em Information Processing Letters}, 75(3):127--132, 2000.

\bibitem{harvey18}
N.~J.~A. Harvey, P.~Srivastava, and J.~Vondr\'ak.
\newblock Computing the independence polynomial: from the tree threshold down
  to the roots.
\newblock In Czumaj. A., editor, {\em Proceedings of the 2018 Annual ACM-SIAM
  Symposium on Discrete Algorithms (SODA)}, pages 1557--1576, 2018.

\bibitem{krattenthaler06}
C.~Krattenthaler.
\newblock The theory of heaps and the {C}artier-{F}oata monoid.
\newblock In {\em Electronic reedition of ``Probl\`emes combinatoires de
  commutation et r\'arrangements''}, pages 63--73. EMIS, 2006.

\bibitem{krob03}
D.~Krob, J.~Mairesse, and I.~Michos.
\newblock Computing the average parallelism in trace monoids.
\newblock {\em Discrete Mathematics}, 273:131--162, 2003.

\bibitem{robert99}
C.~Robert and G.~Casella.
\newblock {\em Monte {C}arlo statistical methods}.
\newblock Springer, 1999.

\bibitem{valiant79}
L.~Valiant.
\newblock The complexity of enumeration and reliability problems.
\newblock {\em SIAM Journal on Computing}, 8(3):410--421, 1979.

\bibitem{viennot86}
X.~Viennot.
\newblock Heaps of pieces,~{I}: basic definitions and combinatorial lemmas.
\newblock In G.~Labelle and P.~Leroux, editors, {\em Combinatoire
  \'enum\'erative}, volume 1234 of {\em Lecture Notes in Mathematics}, pages
  321--350. Springer, 1986.

\end{thebibliography}

\end{document}